\newcommand{\Gateaux}{Gateaux}
\newcommand{\eqdef}{\stackrel{\scriptscriptstyle\rm def}{=}}
\newtheorem{theorem}{Theorem}
\newtheorem*{thmM}{Main Theorem}
\newtheorem{corollary}{Corollary}
\newtheorem{lemma}{Lemma}
\newtheorem{remark}{Remark}
\DeclareMathOperator{\card}{card}
\renewcommand{\epsilon}{\varepsilon}
\newcommand{\cM}{\EuScript{M}}
\newcommand{\cQ}{\mathcal{Q}}
\newcommand{\cP}{\EuScript{P}}
\newcommand{\cH}{\EuScript{H}}
\newcommand{\cRR}{\EuScript{R}}
\newcommand{\bR}{{\mathbb R}}
\newcommand{\bZ}{{\mathbb Z}}
\newcommand{\bN}{{\mathbb N}}
\newcommand{\cA}{{\mathcal A}}
\newcommand{\cL}{{\mathcal L}}
\def\dist{\text{{\rm dist}}}
\DeclareMathSymbol{\varnothing}{\mathord}{AMSb}{"3F}
\renewcommand{\emptyset}{\varnothing}
\title{Multiple phase transitions on compact symbolic systems}
\author{Tamara Kucherenko}\address{Department of Mathematics,
The City College of New York, New York, NY, 10031, USA}\email{tkucherenko@ccny.cuny.edu}
\author{Anthony Quas}\address{ Department of Mathematics and Statistics, University of Victoria, Victoria, BC
Canada}\email{aquas@uvic.ca}
\author{Christian Wolf}\address{Department of Mathematics, The City College of
New York, New York, NY, 10031, USA}\email{cwolf@ccny.cuny.edu}
\thanks{T.K. is supported by grants from the Simons Foundation \#430032 and from the PSC-CUNY TRADA-48-19. }
\thanks{A.Q. is supported by a grant from NSERC}
\thanks{C.W. was supported by  grants from  the Simons Foundation (\#637594 to Christian Wolf) and from the PSC-CUNY (TRADB-51-63715).}
\begin{document}

\begin{abstract}
Let $\phi:X\to \bR$ be a continuous potential associated with a symbolic 
dynamical system $T:X\to X$ over a finite alphabet.  Introducing a parameter 
$\beta>0$ (interpreted as the inverse temperature) we study the regularity of
the pressure function $\beta\mapsto P_{\rm top}(\beta\phi)$ on an interval 
$[\alpha,\infty)$ with $\alpha>0$. We say that $\phi$ has a phase transition at 
$\beta_0$ if the pressure function $P_{\rm top}(\beta\phi)$ is not differentiable 
at $\beta_0$. This is equivalent to the condition that the potential $\beta_0\phi$ 
has two (ergodic) equilibrium states with distinct entropies. For any $\alpha>0$ 
and any increasing sequence of real numbers $(\beta_n)$ contained in 
$[\alpha,\infty)$, we construct a potential $\phi$ whose phase transitions in 
$[\alpha,\infty)$ occur precisely at the $\beta_n$'s. In particular, we obtain a 
potential which has a countably infinite set of phase transitions. 
\end{abstract}





\keywords{equilibrium states, phase transitions, thermodynamic formalism,
topological pressure, variational principle}
\subjclass[2000]{}
\maketitle
\section{Introduction} Broadly speaking a phase transition refers to a qualitative 
change of the statistical properties of a dynamical system. The precise definition 
of this notion differs depending on which settings and properties one studies. A phase 
transition might mean co-existence of several equilibrium states resulting from some 
optimization \cite{CET,E,G}, lack of the \Gateaux{} differentiability of the pressure 
functional \cite{CR,DE,IRV,Ph}, or loss of analyticity of the pressure with respect to 
external physical parameters such as temperature \cite{IT,JOP,S1}. The latter gives 
rise to further differentiation between the first-order, second-order, or higher order phase transitions.

We briefly note the relationship between the regularity of the pressure and coexistence 
of several equilibria. We refer to Section \ref{ThermodynamicFormalism} for the definitions 
and a formal discussion. It was shown by Walters \cite{W1} that the \Gateaux{} differentiability 
at $\phi$ of the pressure functional acting on the space of continuous potentials is equivalent 
to the uniqueness of the equilibrium state for $\phi$. On the other hand, non-differentiability 
of the pressure in the direction of $\phi$ necessarily implies coexistence of several equilibrium 
states. Recently, Leplaideur \cite{Lep} discovered the surprising fact that the converse to 
this statement is not true. He provided an example of a continuous potential $\phi$ defined 
on a mixing subshift of finite type such that the pressure is analytic in the direction of $\phi$, 
but uniqueness of equilibrium states fails. Moreover, in Leplaideur's example the uniqueness 
of equilibrium states fails for two  distinct inverse temperature values.

The existence of phase transitions has also been established for parabolic systems 
and the geometric potential, see, e.g., \cite{ADU,BI,Lo,UW}. Roughly speaking in these 
examples the \emph{degree of parabolicity} near the parabolic point(s) determines 
whether the second equilibrium state is finite or $\sigma$-finite.

A number of related questions have been studied in the statistical physics literature. 
Miekisz \cite{M} starts from the well-known Robinson two-dimensional shift of finite 
type that admits only aperiodic configurations. These configurations have a highly 
regular structure, and are considered to be a quasi-crystal. Miekisz's model allows 
``forbidden" configurations to appear with a local energy cost. There is a conjectural 
picture in the statistical physics literature which would imply that at finite temperatures, 
the equilibrium states exhibit global periodicity with local fluctuations, while at zero 
temperature the equilibrium measure is supported on quasi-crystal states. In this picture, 
there is a sequence of phase transitions as the temperature is reduced at which the 
global period increases. In support of this conjecture, Miekisz establishes an increasing 
sequence of lower bounds on the global period as the temperature is reduced to zero. 
It should be pointed out that the paper does not demonstrate that the quasi-crystal 
state is not attained at positive temperature, but rather establishes that conditional 
on the positive temperature states not being quasi-crystalline, there must be an 
infinite sequence of phase transitions.

Another related model due to van Enter and Shlosman \cite{ES} deals with a model in 
dimension 2 or higher with a continuous alphabet (the circle). They describe a nearest 
neighbour potential (the ``Seuss model", named after the Cat in the Hat) with an 
infinite sequence of first order phase transitions in this setting.

In this note we are concerned with the first-order phase transitions of the pressure 
function with respect to a parameter regarded as the inverse temperature. We consider 
a continuous potential $\phi:X\to \bR$ associated with a symbolic dynamical system 
$(X,T)$ over a finite alphabet.  Given a positive real number $\beta$, we study the 
regularity of the pressure function $\beta\mapsto P_{\rm top}(\beta\phi)$. We say 
$\phi$ has a phase transition at $\beta_0$ if the pressure function 
$\beta\mapsto P_{\rm top}(\beta\phi)$ is not differentiable at $\beta_0$. 
This is equivalent to the condition that the potential $\beta_0\phi$ has two 
(ergodic) equilibrium states with distinct entropies (see Section 
\ref{ThermodynamicFormalism} for details).

It is a classical result due to Ruelle \cite{Ru1,Ru2} that if $X$ is a transitive subshift 
of finite type then the pressure functional $P_{\rm top}$ acts real analytically on 
the space of H\"older continuous potentials, that is, for all H\"older continuous 
$\phi,\psi:X\to \bR$ we have that $\beta\mapsto P_{\rm top}(\phi+\beta\psi)$ 
is analytic in a neighborhood of $0$. This immediately implies the uniqueness of 
equilibrium states for H\"older continuous potential, which is referred to as 
``lack of phase transitions," \cite{E}. Therefore, in order to allow the possibility 
of phase transitions (i.e. the occurrence of distinct equilibrium states) one needs 
to consider potential functions that are merely continuous.

Although phase transitions have been studied for many classes of dynamical 
systems, to the best of our knowledge this is the first family of examples in the 
one-dimensional symbolic setting with more than two phase transitions.
In this paper we develop a method to explicitly construct a continuous potential 
with any finite number of first order phase transitions in any given interval $[\alpha,\infty)$ 
occurring at any sequence of predetermined points. We are able to go even further. 
Note that the convexity of the pressure implies that a continuous potential $\phi$ 
has at most countably many phase transitions. We show that the case of infinitely 
many phase transitions can indeed be realized. In the following statement we 
summarize our results given by Theorem \ref{main}, Corollary \ref{FiniteCase}, 
Corollary \ref{Quasi-crystal}, Corollary \ref{cor123} and Remark \ref{NoMoreTransitions}.

\begin{thmM}Let $T:X\to X$ be the two-sided full shift, $\alpha$ be any positive number 
and $(\beta_n)$ be a strictly increasing (finite or infinite) sequence in $[\alpha,\infty)$. 
Then there exists a continuous potential $\phi:X\to\bR$ such that the following holds:
\begin{itemize}
\item[(i)] When $\beta\ge\alpha$ the potential $\phi$ has a phase transition at 
$\beta$ if and only if $\beta=\beta_n$ for some $n\in \bN$;
\item[(ii)] If $\lim\limits_{n\to\infty} \beta_n=\beta_\infty<\infty$, then  the family 
of equilibrium states of $\beta\phi$ is constant for all $\beta\ge\beta_\infty$.
\end{itemize}
\end{thmM}

While we are able to completely control the function $\beta\mapsto P(\beta\phi)$ 
on the interval $[\alpha,\infty)$, we do not have a full picture of the behaviour for $\beta$ 
values in the range $[0,\alpha)$. In the range $[\alpha,\infty)$, the potential drops so 
sharply off certain sets $X_n$ that equilibrium states are forced to live on the $X_n$'s. 
For $\beta$ below $\alpha$, the ``cost" incurred by leaving $\bigcup X_n$ is not 
sufficiently high to prevent equilibrium states having support outside the union. 
We conjecture that there exists a $\beta_*$ such that for $\beta<\beta_*$, the 
equilibrium state for $\beta\phi$ is fully supported, and indeed in this region, we 
expect that the pressure depends analytically  on $\beta$.

Of particular interest is the behavior of the pressure function 
$\beta\mapsto P_{\rm top}(\beta\phi)$ as $\beta\to\infty$. A simple argument 
shows that $P_{\rm top}(\beta\phi)$ has an asymptote of the form $a\beta+b$.  
Taking finitely many $(\beta_n)_{n=1}^{N}$ in part (i) of the Main Theorem we see
that $P_{\rm top}(\beta\phi)$ reaches its asymptote at $\beta=\beta_N$. Hence, we 
have an ultimate phase transition at $\beta=\beta_N$. Physically, this means that 
for some positive temperature $1/\beta_N$, the systems reaches its ground state 
which is the unique measure of maximal entropy of a certain subshift of  $X$ in our 
construction, and then ceases to change. This phenomenon is often referred  to as a 
freezing phase transition at which the system reaches a ground state. We refer to 
\cite{BL,BL2} and the references therein for details about freezing phase transitions.

We note that in the situation of the Main Theorem part (ii) with infinitely many 
$\beta_n$ values we also obtain a freezing phase transition. Namely, for all 
$\beta\ge \beta_{\infty}$ there are precisely two ergodic equilibrium states both 
of which are  fixed point measures and each equilibrium state is a convex 
combination of these fixed point measures.
In particular, the set of equilibrium states of $\beta\phi$ does not change 
anymore when the temperature $1/\beta$ is lowered.

Finally, we mention that the case of finite alphabet shift maps crucially differs 
from that of  countable alphabet symbolic systems. Indeed, for countable Markov 
shifts Sarig established several new phenomena that are associated with the
lack of analyticity of the pressure function. This includes positive Lebesgue measure 
non-analyticity points of the pressure function, which are associated with the 
existence of multiple equilibrium states and/or intervals of intermittent behavior, 
i.e.{} an interval of $\beta$'s  with an infinite conservative equilibrium state. We 
refer to \cite{S1,S2} for details.

We thank Aernout van Enter for helpful information about the statistical physics literature. 

\section{Preliminaries}

\subsection{Thermodynamic Formalism.}\label{ThermodynamicFormalism}
Let  $T:X\to X$ be a homeomorphism on a compact metric space $X$, and denote
by $\cM=\cM_T$ the set of all $T$-invariant probability measures on $X$ endowed 
with the weak$^\ast$ topology. This makes $\cM$ a compact convex metrizable 
topological space. Further, let $\cM^e=\cM^e_T$ be the subset of ergodic measures. 
For $\mu\in \cM$ the
measure-theoretic entropy of $\mu$, denoted by $h_\mu(T)$, is defined as follows.
Let $\cP$ be a countable measurable partition of $X$. We define the entropy
of the partition $\cP$ with respect to $\mu$
\begin{equation}\label{PartitionEntropy}
  H_\mu(\cP)=\sum_{P\in\cP}-\mu(P)\log \mu(P).
\end{equation}
Here we interpret $0\log 0$ as $0$.
We denote by $\bigvee_{i=m}^nT^i(\cP)$ the coarsest common refinement of the partitions
$T^m(\cP),...,T^n(\cP)$. Then the entropy of $\mu$ with respect to the partition $\cP$ and $T$ is given by

\begin{equation}\label{MetricEntropy}
  h_\mu(T,\cP)=\lim\limits_{n\to\infty}\frac1n H_\mu\left(\bigvee_{i=0}^nT^{-i}(\cP)\right)=
  \inf\frac1n H_\mu\left(\bigvee_{i=0}^nT^{-i}(\cP)\right)
\end{equation}
Finally, we define the measure-theoretic entropy of $\mu$ by
\begin{equation}\label{defmuent}
h_\mu(T)=\sup\left\{h_\mu(T,\cP): H_\mu(\cP)<\infty\right\}.
\end{equation}
We note that  definition \eqref{defmuent}  is equivalent to the frequently used 
definition in terms of finite partitions, see, e.g. \cite{Ke}. We say $\cP$ is a 
generating partition if $\bigvee_{i=-\infty}^\infty T^i(\cP)$ coincides with the 
original $\sigma$-algebra. For a generating partition $\cP$ with finite entropy
$H_\mu(\cP)$ we have $h_\mu(T)=H_\mu(T,\cP)$.

We may define the \emph{topological entropy} of the system $(X,T)$ via the variational principle:
\begin{equation}\label{TopEntropyf}
  h_{\rm top}(T)=h_{\rm top}(X,T)=\sup\{h_\mu(T):\mu\in\cM\}=\sup\{h_\mu(T):\mu\in\cM^e\}.
\end{equation}
If a  measure $\mu$ realizes the supremum in \eqref{TopEntropyf} we say that $\mu$ is a
\emph{measure of maximal entropy}. Note that if the entropy map $\mu\mapsto h_\mu(T)$
is upper semi-continuous on $\cM$ then there exists at least one measure of
maximal entropy. This holds, in particular, for symbolic systems over finite alphabets 
which are considered in this paper.

The \emph{topological pressure} (with respect to $T$) is a mapping
$ P_{\rm top}\colon C(X,\bR)\to \bR\cup\{\infty\}$ which is convex, $1$-Lipschitz continuous and  satisfies
\begin{equation}\label{VarPrinciple}
P_{\rm top}(\phi)=
\sup_{\mu\in \cM} \left(h_\mu(T)+\int_X \phi\,d\mu\right).
\end{equation}
For a $T$-invariant set $Y\subset X$ we frequently use the notation $P_{\rm top}(Y,\phi)$ 
for the topological pressure with respect to $T\vert_Y$ of the potential $\phi\vert_Y$.
As in the case of topological entropy, the supremum in (\ref{VarPrinciple}) can be replaced by
the supremum taken only over all $\mu\in\cM^e$. If there exists a measure
$\mu\in \cM$ at which the supremum in \eqref{VarPrinciple} is
attained it is called an \emph{equilibrium state} (or also equilibrium measure) of the potential $\phi$.

It turns out that there is a connection between the structure of the set of equilibrium
states of a potential $\phi$ and \Gateaux{} differentiability of the pressure mapping
at $\phi$. As for any continuous real-valued map on a Banach space, the pressure is \Gateaux{}
differentiable at $\phi$ if and only if the tangent functional to $P_{\rm top}$ at $\phi$ is
unique. Since every equilibrium state of $\phi$ is a tangent functional to $P_{\rm top}$ at
$\phi$, non-uniqueness of equilibrium states immediately implies
non-differentiability of the pressure at $\phi$. We further note that if $T$ has an 
upper semi-continuous entropy map
then $\mu$ is a tangent functional  if and only if $\mu$ is an equilibrium state \cite{Ke}.

We are interested in the differentiability of the pressure function
$\beta\mapsto P(\beta)\eqdef P_{\rm top}(\beta\phi)$ for a fixed potential $\phi$.
The convexity of the topological pressure implies that left and right derivatives for
$P(\beta)$, denoted $d^+P(\beta)$ and $d^-P(\beta)$ respectively,
exist at every point $\beta$. Moreover, we have (see, e.g., \cite{W1}):
\begin{align*}
  d^+P(\beta) & =\sup\left\{\int\phi \,d\mu:\,\,
  \mu \text{ is a tangent functional to $P$ at }\beta\phi\right\} \\
  d^-P(\beta) & =\inf\left\{\int\phi \,d\mu:\,\,
  \mu \text{ is a tangent functional to $P$ at }\beta\phi\right\}.
\end{align*}
Hence, simply having two distinct equilibrium states for a potential
$\beta_0\phi$ does not guarantee non-differentiability of
$P(\beta)$ at $\beta_0$. We must, in addition, require
that the integrals of $\phi$ with respect to the equilibrium states differ,
or equivalently for $\beta_0\ne 0$, that there are (ergodic) equilibrium states of 
$\beta_0\phi$ with different entropies.

\subsection{Symbolic Spaces.}
Let $d\in \bN$ and let $\cA=\{0,\dots,d-1\}$ be a finite alphabet with $d$
symbols. The (two-sided) shift space $X$ on the alphabet $\cA$ is the set
of all bi-infinite sequences $x=(x_n)_{n=-\infty}^\infty$ where $x_n\in \cA$
for all $n\in \bZ$.  We endow $X$ with the Tychonov product topology
which makes $X$ a compact metrizable space. It is easy to see that
\begin{equation}\label{defmet}
d(x,y)=2^{-\inf\{|n|:\  x_n\not=y_n\}}
\end{equation}
defines a metric which induces the Tychonov product topology on $X$.
Note that $d$ is actually an ultrametric, i.e. it satisfies the strong triangle inequality 
$d(x,y)\le \max\{d(x,z),\, d(z,y)\}$ for all $x,y,z\in X$.

The shift map $T:X\to X$ given by $T(x)_n=x_{n+1}$ is a
homeomorphism on $X$. For a word $w=w_0\cdots w_{n-1}\in \cA^n$,
we denote by $[w]=[w_0\cdots w_{n-1}]=\{x\in X: x_0=w_0,\dots,
x_{n-1}=w_{n-1}\}$ the cylinder generated by $w$.
For any two words $w=w_0\cdots w_{n-1}$ with $n\in \bN$ and
$v=v_0\cdots v_{m-1}$ with $m\in \bN\cup\{\infty\}$ we denote by
$wv$ their concatenation  $wv=w_0\cdots w_{n-1}v_0\cdots v_{m-1}$.

If $Y\subset X$ is a non-empty closed $T$-invariant set we  say
that $T\vert_Y$ is a sub-shift. For a subshift $Y\subset X$ we denote by $\cL_n(Y)$
the set of all admissible words in $Y$ of length $n$, i.e.
$$
\cL_n(Y)=\{w\in \cA^n:
[w]\cap Y\ne\emptyset\}.
$$
Then $\cL(Y)=\bigcup_{n=1}^\infty \cL_n(Y)$ is referred to as the
\emph{language} of $Y$. One way to create various subshifts is through
coded systems. We say that $(Y,T)$ is a \emph{coded system} if $Y$
it is the closure of the set of sequences obtained by freely concatenating
the words in a set $W \subset \cL(X)$. In this case $W$ is called the \emph{generating set} of Y.

We conclude this section with a remark that the topological entropy of a
subshift $Y$ can be computed as the logarithmic growth of the number
of words length $n$ in the language of $Y$. More precisely,
\begin{equation}\label{TopEntropy}
  h_{\rm top}(Y,T)=\lim\limits_{n\to\infty}\frac{\log\card\cL_n(Y)}{n}.
\end{equation}
Here and for the reminder of the paper all logarithms are taken to base 2.

\section{Proof of the Main Theorem}

We deal first with the case of a countably infinite number of phase transitions. 
The reduction to the case of finitely many phase transitions is done in Corollary \ref{FiniteCase}.
\begin{theorem}\label{main}
  Let $X$ be a two-sided full shift on two symbols. Then for any positive real number $\alpha$ and any
  strictly increasing sequence $(\beta_n)_{n=1}^\infty$ in $[\alpha,\infty)$ there is a continuous potential
  $\phi:X\to\bR$ such that the pressure function $\beta\mapsto
  P(\beta\phi)$ on $[\alpha,\infty)$ is not differentiable exactly
  at points $\beta_n,\, n\in\bN$.
\end{theorem}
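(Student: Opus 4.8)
The plan is to arrange that the pressure function on $[\alpha,\infty)$ is, by construction, the upper envelope of an explicit countable family of affine functions whose break points are exactly the $\beta_n$. First I would fix auxiliary data: a strictly decreasing sequence of positive reals $s_0>s_1>s_2>\cdots$ with $\sum_{n\ge 1}(s_{n-1}-s_n)\beta_n$ finite and sufficiently small, together with reals $h_0>h_1>h_2>\cdots$ in $(0,\log 2)$ defined by taking $h_0$ close to $\log 2$ and $h_n=h_{n-1}-(s_{n-1}-s_n)\beta_n$ (the summability condition keeping every $h_n$ positive). Put $L_n(\beta)=h_n-s_n\beta$. By construction $L_{n-1}(\beta_n)=L_n(\beta_n)$, the slopes $-s_n$ strictly increase in $n$, and after checking that the points $(s_n,h_n)$ are in strictly convex position one gets that $\overline P(\beta):=\sup_n L_n(\beta)$ is convex and piecewise affine on $[\alpha,\infty)$, with $L_0$ the active piece on $[\alpha,\beta_1]$, $L_n$ on $[\beta_n,\beta_{n+1}]$, and set of non-differentiability points equal to $\{\beta_n\}$ (when $\beta_n\to\beta_\infty<\infty$ the envelope becomes affine, hence differentiable, past $\beta_\infty$).

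Next I would realize $\overline P$ as a pressure. Using that subshifts of the full $2$-shift attain every topological entropy in $[0,\log 2]$, I would build nested subshifts $X_0\supset X_1\supset X_2\supset\cdots$ with $h_{\rm top}(X_n)=h_n$, $X_0$ proper, and $X_n=Z_n\sqcup X_{n+1}$ where $Z_n$ is a subshift with $h_{\rm top}(Z_n)=h_n$ lying at positive distance from $X_{n+1}$; and a continuous $\phi\le 0$ which equals the constant $-s_n$ on $Z_n$, equals $-s_\infty:=-\lim s_n$ on $X_\infty:=\bigcap_n X_n$ (continuity across $X_\infty$ coming from $s_n\to s_\infty$), interpolates in the gaps, and drops sharply to values near $\min\phi$ as one moves into $X\setminus X_0$, the steepness calibrated in terms of $\alpha$ and $\log 2$. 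Then the measure of maximal entropy $\mu_n$ of $Z_n$ satisfies $h_{\mu_n}=h_n$ and $\int\phi\,d\mu_n=-s_n$, so $P(\beta\phi)\ge L_n(\beta)$ for every $n$ and every $\beta$, giving the lower bound $P(\beta\phi)\ge\overline P(\beta)$.

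The crux is the matching upper bound for $\beta\ge\alpha$, and it splits in two. The first part is an inductive computation of the restricted pressures: since $X_n\setminus X_{n+1}=Z_n$ is a $T$-invariant set on which $\phi\equiv-s_n$, decomposing an arbitrary $T$-invariant measure carried by $X_n$ over the invariant sets $X_{n+1}$ and $Z_n$ and using the affinity of the entropy map together with $h_{\rm top}(X_n)=h_n$ yields $P(X_n,\beta\phi)\le\max\{L_n(\beta),\,P(X_{n+1},\beta\phi)\}$; combined with the obvious reverse inequalities this gives $P(X_n,\beta\phi)=\sup_{m\ge n}L_m(\beta)$, and letting $n\to\infty$ in the telescoping identity, $P(X_0,\beta\phi)=\overline P(\beta)$ for every $\beta$. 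The second part, the genuinely delicate one, is to show $P(\beta\phi)=P(X_0,\beta\phi)$ once $\beta\ge\alpha$, i.e.\ that no invariant measure charging $X\setminus X_0$ beats the envelope; this is exactly where the sharp drop of $\phi$ off $X_0$ is used and where the threshold $\alpha$ enters — for $\beta<\alpha$ the estimate fails, which is why the theorem only controls $[\alpha,\infty)$. Together these give $P(\beta\phi)=\overline P(\beta)$ on $[\alpha,\infty)$, and the first paragraph identifies the non-differentiability points as precisely the $\beta_n$.

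I expect this escaping-mass estimate to be the main obstacle: it requires a quantitatively sharp entropy bound showing that the at most $\log 2$ of entropy available outside $X_0$ cannot pay the potential cost when $\beta\ge\alpha$, and it forces $\phi$ to be simultaneously continuous, faithfully adapted to the level structure, and steep enough off $X_0$ — a tight balancing act. A secondary technical point, to be handled when fixing the parameters, is ensuring that $\overline P$ has a genuine corner at each $\beta_n$ and at no other point; in the infinite-sequence case this is what forces the summability constraint on $(s_{n-1}-s_n)\beta_n$, which simultaneously keeps the $h_n$ positive and keeps the lines in strictly convex position.
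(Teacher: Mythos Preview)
Your high-level strategy matches the paper's: arrange disjoint invariant pieces with prescribed entropies and constant potential values so that $\beta\mapsto P(\beta\phi)$ becomes the upper envelope of a countable family of affine functions with corners exactly at the $\beta_n$. The paper does this with pairwise \emph{disjoint} subshifts $X_n$ (generated by two words of length $2^n$, with $h_{\rm top}(X_n)=2^{-n}$) rather than your nested chain $X_0\supset X_1\supset\cdots$, but your $Z_n$'s play the same role as the paper's $X_n$'s, and your reduction $P(X_0,\beta\phi)=\sup_n L_n(\beta)$ via the splitting $X_n=Z_n\sqcup X_{n+1}$ is a clean substitute for the paper's Lemma~1 combinatorics. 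One caution: your appeal to ``subshifts attain every entropy'' does not by itself produce the structure $X_n=Z_n\sqcup X_{n+1}$ with $Z_n$ a subshift at positive distance from $X_{n+1}$; this needs an explicit construction, and the paper simply writes one down.

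The genuine gap is the step you flag yourself: showing that for $\beta\ge\alpha$ no ergodic measure on $X\setminus X_0$ beats $\overline P(\beta)$. This is not a matter of ``making $\phi$ drop sharply enough'': continuity forces $\phi(x)\to\phi|_{X_0}$ as $d(x,X_0)\to0$, so for any choice of drop-off there are invariant measures living arbitrarily close to $X_0$, with $\int\phi\,d\mu$ arbitrarily close to the values on $X_0$, yet with entropy potentially as large as $\log 2>h_0$. What is needed is a quantitative trade-off: a point at distance $2^{-j}$ from $X_0$ contributes roughly $O(\log j)$ of ``extra'' entropy (coming from the choice of where the non-$\cL(X_0)$ defect sits and which piece $Z_s$ one is near), and this must be beaten by a potential cost of order $j\cdot\delta_j$ where $\delta_j$ is the drop at distance $2^{-j}$. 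The correct scaling is $\delta_j\asymp(\log j)/j$, which only barely tends to zero. The paper carries this out via a nontrivial pin-sequence coupling: it passes to an extension $P\subset X\times\{0,1\}^{\bZ}$ in which a second coordinate records the greedy parsing of $x$ into maximal $\cL(Y)$-blocks, induces on the set of pin locations, and bounds the induced entropy (Lemma~3) and the induced integral (Lemma~4) in terms of the return-time distribution $(q_{j,s})$. This is the heart of the argument, and nothing in your outline indicates a mechanism that would replace it; without it the proposal remains a plan rather than a proof.
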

We build a potential $\varphi$ on the full shift $(X,T)$ with alphabet $\cA=\{0,1\}$ whose equilibrium states
move among a sequence of disjointly supported subshifts. Specifically we take
$(X_n)$ to be a sequence of disjoint proper shifts of finite type that are sub-systems
of $X$. We also define $X_\infty$ to be the set of accumulation points of $\bigcup X_n$.
We fix a positive strictly increasing sequence $(\beta_n)_{n=1}^\infty$ and 
find a corresponding sequence of values $(c_n)_{n=1}^\infty$.
The potential we build is then initially specified as a constant $c_n$ on each $X_n$,
and $c=\lim c_n$ on $X_\infty$. The potential then drops sharply for points in $X$
outside $\bigcup X_n$ as a means of forcing the equilibrium measures at all values of
the inverse temperature to be supported on $\bigcup X_n$. Clearly, care needs to be
taken to ensure that the resulting potential is continuous.
Provided that the drop-off is sufficiently steep, we obtain
$P_{\rm top}(X,\beta\phi)=\max_n P_{\rm top}(X_n,\beta\phi)$.
The phase transitions occur at those values of $\beta$ for which
$P_{\rm top}(\beta\phi)=P_{\rm top}(X_n,\beta\phi)=P_{\rm top}(X_{n+1},\beta\phi)$.

For $n\in\bN$ consider the subshift $X_n$ generated by
$W_n=\{(\underbrace{00...01}_{2^n}),(\underbrace{11...10}_{2^n})\}$.
We denote the topological entropy of $X_n$ by $h_n$ and the measure of
maximal entropy for $X_n$ by $\mu_n$. It is easy to see that $h_n=2^{-n}$ and
the subshifts $(X_n)$ are pairwise disjoint.

The next lemma provides a way to select the appropriate values of the
potential on the subshifts $X_n$ for a given set of discontinuity points $(\beta_n)$.

\begin{lemma}\label{Choice of c_n}
Suppose $(\beta_n)_{n=1}^\infty$ is a strictly increasing sequence of positive 
real numbers. Then there exists a strictly increasing sequence $(c_n)_{n=1}^\infty$ with
$\lim\limits_{n\to\infty} c_n=c$ such that for any $n\in\bN$ the following holds.
\begin{enumerate}
 \item $h_{n}+\beta_n c_{n}=h_{n+1}+\beta_n c_{n+1}$.
 \item For any $\beta\in[\beta_{n},\beta_{n+1}]$ and any $k\in\bN$ we have 
  $h_{k}+\beta c_{k}\le h_{n+1}+\beta c_{n+1}$
  with strict inequality provided that $k\not\in\{n,n+1\}$. 
  Moreover, for $\beta\in[0,\beta_1]$ we have $h_k+\beta c_k\le h_1+\beta c_1$ for $k\in\bN$.
 \item In addition, $\beta c\le h_{n}+\beta c_{n}$ for any positive $\beta\le\beta_n$.
\end{enumerate}
\end{lemma}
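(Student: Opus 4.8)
The plan is to recognize condition (1) as a prescription for the increments $c_{n+1}-c_n$, to build $(c_n)$ from that prescription, and then to deduce (2) and (3) from a single elementary identity. It is convenient to phrase everything in terms of the affine functions $\ell_k(\beta):=h_k+\beta c_k=2^{-k}+\beta c_k$ ($k\in\bN$) and the limiting ray $\beta\mapsto\beta c$; conditions (1)--(3) are exactly statements comparing the $\ell_k$'s to each other and to this ray at the points $\beta_n$ and on the intervals $[\beta_n,\beta_{n+1}]$.

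\emph{Construction and (1).} Condition (1) reads $(c_{n+1}-c_n)\beta_n=h_n-h_{n+1}=2^{-n}-2^{-(n+1)}=2^{-(n+1)}$, so it is equivalent to
\[
 c_{n+1}-c_n=\frac{2^{-(n+1)}}{\beta_n}\qquad(n\ge1).
\]
Accordingly I fix $c_1\in\bR$ (any value works, since (1)--(3) involve only differences $c_k-c_j$ and the tail $c-c_n$) and define $c_n$ for $n\ge2$ by this recursion. The increments are strictly positive, so $(c_n)$ is strictly increasing, and since $\beta_n\ge\beta_1$ for all $n$,
\[
 \sum_{n\ge1}(c_{n+1}-c_n)\le\frac{1}{\beta_1}\sum_{n\ge1}2^{-(n+1)}=\frac{1}{2\beta_1}<\infty,
\]
so $c:=\lim_n c_n$ exists in $\bR$, and $c_n=c-\sum_{k\ge n}2^{-(k+1)}/\beta_k$.

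\emph{Key identity, then (3) and (2).} For all $k$ and all $\beta$,
\[
 \ell_{k+1}(\beta)-\ell_k(\beta)=-2^{-(k+1)}+\beta\,\frac{2^{-(k+1)}}{\beta_k}=\frac{2^{-(k+1)}}{\beta_k}\,(\beta-\beta_k),
\]
so $\ell_{k+1}(\beta)\gtreqless\ell_k(\beta)$ exactly as $\beta\gtreqless\beta_k$; since $\beta_k\uparrow$, for fixed $\beta$ the sequence $k\mapsto\ell_k(\beta)$ increases while $\beta_k<\beta$ and decreases once $\beta_k>\beta$. For (3): using $\beta_k\ge\beta_n$ for $k\ge n$,
\[
 c-c_n=\sum_{k\ge n}\frac{2^{-(k+1)}}{\beta_k}\le\frac{1}{\beta_n}\sum_{k\ge n}2^{-(k+1)}=\frac{2^{-n}}{\beta_n}=\frac{h_n}{\beta_n},
\]
hence $\beta(c-c_n)\le\beta_n(c-c_n)\le h_n$ for $0<\beta\le\beta_n$, i.e. $\beta c\le h_n+\beta c_n$. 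For (2), fix $n$ and $\beta\in[\beta_n,\beta_{n+1}]$: if $k\le n$ then $\beta\ge\beta_n\ge\cdots\ge\beta_k$, so $\ell_k(\beta)\le\ell_{k+1}(\beta)\le\cdots\le\ell_{n+1}(\beta)$, strictly when $k<n$ (some step has $\beta>\beta_j$), with equality permitted at $k=n$; if $k\ge n+2$ then $\beta\le\beta_{n+1}<\beta_{n+2}<\cdots$, so $\ell_{n+1}(\beta)\ge\ell_{n+2}(\beta)\ge\cdots\ge\ell_k(\beta)$, and on $(\beta_n,\beta_{n+1})$ no step can be an equality, giving $\ell_k(\beta)<\ell_{n+1}(\beta)$. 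The ``moreover'' clause is the same computation: for $\beta\in[0,\beta_1]$ one has $\beta\le\beta_1<\beta_2<\cdots$, so $\ell_1(\beta)\ge\ell_2(\beta)\ge\cdots$, i.e. $h_k+\beta c_k\le h_1+\beta c_1$ for every $k$.

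There is no serious obstacle: the content is observing that (1) forces the increments of $(c_n)$, after which unimodality of $k\mapsto\ell_k(\beta)$ (peak at the index $n+1$ determined by $\beta_n\le\beta\le\beta_{n+1}$) is immediate, and (3) together with the finiteness of $c$ follow from the geometric sum $\sum_{k\ge n}2^{-(k+1)}=2^{-n}$ and $\beta_k\uparrow$. The only delicate bookkeeping is at the breakpoints: there the two ``adjacent'' lines necessarily coincide, as forced by (1), so at $\beta=\beta_{n+1}$ one has $\ell_{n+2}(\beta_{n+1})=\ell_{n+1}(\beta_{n+1})$; the strict inequality in (2) is therefore to be read on the half-open interval $[\beta_n,\beta_{n+1})$, this single endpoint being recovered from the description of $[\beta_{n+1},\beta_{n+2}]$.
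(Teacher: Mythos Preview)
Your argument is correct and follows essentially the same route as the paper: the same recursion $c_{n+1}-c_n=2^{-(n+1)}/\beta_n$ (the paper picks the particular normalization $c_n=-\sum_{j\ge n}2^{-(j+1)}/\beta_j$, so $c=0$), the same telescoping geometric-sum estimate for (3), and the same comparison of the affine functions $\ell_k$ for (2). Your packaging via the single identity $\ell_{k+1}(\beta)-\ell_k(\beta)=\tfrac{2^{-(k+1)}}{\beta_k}(\beta-\beta_k)$ and the resulting unimodality of $k\mapsto\ell_k(\beta)$ is a slightly cleaner way to organize (2) than the paper's two-case computation at $\beta_n$ followed by a convex-combination interpolation, but the content is the same. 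Your observation about the right endpoint $\beta=\beta_{n+1}$ (where $\ell_{n+2}(\beta_{n+1})=\ell_{n+1}(\beta_{n+1})$, so strictness fails for $k=n+2$) is correct and applies equally to the paper's proof; the lemma's strict-inequality clause should indeed be read as you describe, and this causes no harm in the application.
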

\begin{proof}
  We may take
  \begin{equation}\label{Def_c_n}
    c_n=-\sum_{j=n}^{\infty}\frac{1}{2^{j+1}\beta_j}.
  \end{equation}
We immediately see that $c_{n+1}-c_{n}=\frac{1}{2^{n+1}\beta_n}>0$
and hence $(c_n)_n$ is strictly increasing. Also, $\lim\limits_{n\to\infty} c_n=0$
since $(c_n)_n$ is the sequence of tail sums of a convergent series. The first assertion of the
lemma is equivalent to $\beta_n(c_{n+1}-c_n)=h_n-h_{n+1}$, which is true because
$$
\beta_n(c_{n+1}-c_n)=\beta_n\frac{1}{2^{n+1}\beta_n}=
\frac{1}{2^{n+1}}=h_{n}-h_{n+1}.
$$

To check the second assertion we first show that for any $k\not\in\{n,n+1\}$ 
we have $h_{k}+\beta_n c_{k}< h_{n}+\beta_n c_{n}$. We consider two cases $k<n$ and $k>n+1$.
If $k<n$ then since $(\beta_n)$ is increasing, we get
$$
\beta_n(c_{n}-c_k)=\beta_n\sum_{j=k}^{n-1}\frac{1}{2^{j+1}\beta_j}>
\beta_n\sum_{j=k}^{n-1}\frac{1}{2^{j+1}\beta_n}=
\sum_{j=k}^{n-1}\frac{1}{2^{j+1}}=\frac{1}{2^k}-\frac{1}{2^n}=h_k-h_n.
$$
If $k>n+1$,
$$
\beta_n(c_{n}-c_k)=-\beta_n\sum_{j=n}^{k-1}\frac{1}{2^{j+1}\beta_j}<
-\beta_n\sum_{j=n}^{k-1}\frac{1}{2^{j+1}\beta_n}=
-\sum_{j=n}^{k-1}\frac{1}{2^{j+1}}=h_k-h_n.
$$
When $\beta\in [\beta_{n},\beta_{n+1}]$ we can write $\beta=t\beta_{n}+
(1-t)\beta_{n+1}$ for some $t\in[0,1]$. Using the above observation and the first
part of the lemma we obtain
\begin{align*}
  h_k+\beta c_k & =t(h_k+\beta_{n} c_k)+(1-t)(h_k+\beta_{n+1} c_k) \\
   & \le t(h_{n}+\beta_{n} c_{n})+(1-t)(h_{n+1}+\beta_{n+1} c_{n+1}) \\
   &=h_{n+1}+\beta c_{n+1}.
\end{align*}
Note that the inequality here is strict if $k\ne n$ and $k\ne n+1$. In the case when 
$\beta\in [0,\beta_1]$ we have 
$$
\beta (c_k-c_1)=
\sum_{j=1}^{k-1}\frac{\beta}{2^{j+1}\beta_j}\le\sum_{j=1}^{k-1}\frac{1}{2^{j+1}}
=\frac{1}{2}-\frac{1}{2^k}=h_1-h_k.
$$
Hence, $h_k+\beta c_k\le h_1+\beta c_1$, which completes the proof of part (2).

Finally, the third assertion follows from the facts that $c=0$, but whenever $0\le\beta\le\beta_n$ we have
$$
h_n+\beta_nc_n=\frac{1}{2^n}-\beta_n\sum_{j=n}^{\infty}\frac{1}{2^{j+1}\beta_j}
\ge \frac{1}{2^n}-\sum_{j=n}^{\infty}\frac{1}{2^{j+1}}=0.
$$
\end{proof}

We note that the sequence $(c_n)_{n=1}^\infty$ which satisfies conditions (1)-(3) of Lemma
\ref{Choice of c_n} is determined by the sequence $(\beta_n)_{n=1}^\infty$ up to addition of a constant.

\begin{lemma}\label{Def_phi}
  Suppose $(\beta_n)_n$ and $(c_n)_n$  satisfy Lemma \ref{Choice of c_n}.
  Let $\alpha$ be a fixed real number with $0<\alpha<\beta_1$ and set 
  $\phi(x)=\sup\{\phi_n(x):n\in\bN\}$, where
  $\phi_n:X\to\bR$ is defined by
  \begin{equation}\label{Def_phi_n}
 \phi_n(x)= c_n-\delta_{j},\text{ if }\,{\rm dist}(x, X_n)=\frac{1}{2^j}\quad
 \text{with}\,\,\delta_j=\frac{10+3\log j}{\alpha j}.
\end{equation}
Then
\begin{itemize}
  \item $\phi$ is continuous on $X$;
  \item $\phi|_{X_n}=c_n$;
  \item $P_{\rm top}(X_n,\beta_n\phi)=P_{\rm top}(X_{n+1},\beta_n\phi)$;
  \item If $k\ne n$ and $k\ne n+1$ then $P_{\rm top}(X_{k},\beta_n\phi)<P_{\rm top}(X_n,\beta_n\phi)$.
\end{itemize}
\end{lemma}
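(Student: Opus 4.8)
The plan is to verify the four bullet points in order, the first two being essentially immediate from the construction and the last two following from Lemma \ref{Choice of c_n} once we establish that, for each $\beta$ in a bounded range, the pressure $P_{\rm top}(X_k,\beta\phi)$ is controlled by $h_k + \beta c_k$ plus a correction that is negligible for large $k$.

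First I would check continuity of $\phi$. Each $\phi_n$ is locally constant away from $X_n$ (since $\mathrm{dist}(x,X_n)$ takes only the discrete values $2^{-j}$), and as $x\to X_n$ one has $\mathrm{dist}(x,X_n)\to 0$, i.e.\ $j\to\infty$, so $\delta_j\to 0$ and $\phi_n(x)\to c_n$; thus each $\phi_n$ is continuous. For the supremum, I would argue that near a point $x\in X_n$ the dominant term is $\phi_n$ itself (nearby $X_m$ with $m\ne n$ are at a definite positive distance from $X_n$ since the $X_m$ are pairwise disjoint compact sets, so $\phi_m$ is bounded above by $c_m-\delta_{j_0}$ for some fixed $j_0$ on a neighborhood of $X_n$), while at points far from every $X_n$ the family $\{\phi_n\}$ is uniformly bounded above and only finitely many $\phi_n$ can be ``large'' at a given scale; a uniform-convergence / equicontinuity argument then gives continuity of $\phi$ everywhere. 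The second bullet, $\phi|_{X_n}=c_n$, follows because for $x\in X_n$ we have $\phi_n(x)=c_n$ (the ``$\mathrm{dist}=0$'' case, read as the limit), while for $m\ne n$, $\mathrm{dist}(x,X_m)\ge 2^{-j_0}>0$ gives $\phi_m(x)=c_m-\delta_{j_0} < c_m \le c < \dots$; one checks $c_m-\delta_{j_0}\le c_n$ using that the $c_m$ are increasing with limit $c=0$ and $\delta_{j_0}$ is a fixed positive number, so for all but finitely many $m$ this is clear, and the finitely many remaining ones are handled by taking $j_0$ large, i.e.\ the neighborhoods small. Hence $\sup_m \phi_m(x) = c_n$ on $X_n$.

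Next, the two pressure identities. On the subshift $X_n$ the potential $\phi$ is the constant $c_n$, so $P_{\rm top}(X_n,\beta\phi) = h_{\rm top}(X_n,T) + \beta c_n = h_n + \beta c_n$ by the variational principle together with the fact that $X_n$ has a measure of maximal entropy. Therefore the third bullet, $P_{\rm top}(X_n,\beta_n\phi) = P_{\rm top}(X_{n+1},\beta_n\phi)$, is exactly the identity $h_n + \beta_n c_n = h_{n+1} + \beta_n c_{n+1}$ from Lemma \ref{Choice of c_n}(1), and the fourth bullet, $P_{\rm top}(X_k,\beta_n\phi) < P_{\rm top}(X_n,\beta_n\phi)$ for $k\notin\{n,n+1\}$, is the strict inequality $h_k + \beta_n c_k < h_n + \beta_n c_n$ established inside the proof of Lemma \ref{Choice of c_n}(2) (the case $\beta=\beta_n$, noting $h_{n+1}+\beta_n c_{n+1} = h_n + \beta_n c_n$). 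So once the reduction ``$\phi|_{X_n}=c_n$'' is in hand, the last two bullets are immediate consequences of the already-proved lemma.

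I expect the only real work to be the continuity of $\phi=\sup_n\phi_n$: one must rule out oscillation of the supremum near the ``boundary'' region where points are close to one $X_n$ but the values $c_n$ are accumulating. The key quantitative input is the specific choice $\delta_j = (10+3\log j)/(\alpha j)$, which is tuned so that the drop-off is steep enough (this is also what is needed later to force equilibrium states onto $\bigcup X_n$) yet still tends to $0$; I would make this precise by showing that for any $x$ and any large $N$, $\sup_{n\ge N}\phi_n(x)$ differs from $\sup_{n<N}\phi_n(x)$ by a controlled amount — using that $|c_n - c|$ is summable-tail-small and $\delta_j\to 0$ — so the partial suprema converge uniformly to $\phi$, whence $\phi$ is continuous. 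The pieces involving entropy and pressure are routine applications of the variational principle and of Lemma \ref{Choice of c_n}.
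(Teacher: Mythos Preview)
Your reduction of the last two bullets to Lemma~\ref{Choice of c_n} via $P_{\rm top}(X_k,\beta\phi)=h_k+\beta c_k$ is exactly what the paper does, and is correct. The issues are with the first two bullets.

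For continuity the paper's argument is far more direct than yours. Using that $d$ is an ultrametric, one checks in a single line that $|\phi_n(x)-\phi_n(y)|\le\delta_j$ whenever $d(x,y)\le 2^{-j}$, \emph{uniformly in $n$}: if $d(x,y)=2^{-j}$ and $d(x,X_n)=2^{-k}$ then $d(y,X_n)\le\max(2^{-j},2^{-k})$, so $\phi_n(y)\ge c_n-\delta_{\min(j,k)}\ge\phi_n(x)-\delta_j$. Hence $\{\phi_n\}$ is uniformly equicontinuous and uniformly bounded, and $\sup_n\phi_n$ is automatically continuous. Your route through uniform convergence of partial suprema can presumably be pushed through, but it is more delicate, and the justification you give---``the $X_m$ are pairwise disjoint compact sets, so $\phi_m$ is bounded above by $c_m-\delta_{j_0}$ for some fixed $j_0$''---does not yield a bound uniform in $m$; pairwise disjoint compacta can still accumulate on one another.

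For $\phi|_{X_n}=c_n$ your sketch has a genuine slip. You cannot ``take $j_0$ large'': the exponent is dictated by $d(x,X_m)$, and making $j_0$ large makes $\delta_{j_0}$ \emph{smaller}, which makes the desired inequality $c_m-\delta_{j_0}\le c_n$ harder (recall $c_m>c_n$ for $m>n$). The ``for all but finitely many $m$'' framing is also backwards, since $c_m\uparrow 0>c_n$, so the tail in $m$ is precisely where the inequality is tightest. What is actually needed---and what the paper does---is the explicit geometric bound $d(x,X_k)\ge 2^{-2^n}$ for every $x\in X_n$ and every $k>n$ (read off from the block structure of the generators $W_n$), combined with the quantitative comparison
\[
\delta_{2^n}=\frac{10+3n}{\alpha\,2^n}>\frac{1}{\alpha\,2^n}>\sum_{j\ge n}\frac{1}{2^{j+1}\beta_j}=|c_n|\ge c_k-c_n,
\]
which handles all $k>n$ at once; the case $k<n$ is immediate from $c_k<c_n$.
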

\begin{proof}
First notice that $(\delta_j)$ is decreasing to 0 and write $\delta_\infty=0$. We claim that
for any $n$, $|\varphi_n(x)-\varphi_n(y)|\le\delta_j$ if $d(x,y)\le2^{-j}$.
To see this, let $d(x,y)=2^{-j}$ and $d(x,X_n)=2^{-k}$ (where $k\in\bN\cup\{\infty\})$. Since $d$
is an ultrametric, $d(y,X_n)\le \max(2^{-j},2^{-k})$, so that $\varphi_n(y)\ge
c_n-\delta_{\min(j,k)}$. Since $\varphi_n(x)=c_n-\delta_k$, it follows that
$\varphi_n(y)\ge \varphi_n(x)-\delta_j$. Switching the role of $x$ and $y$, we see that
$|\varphi_n(y)-\varphi_n(x)|\le\delta_j$.
Hence the $\{\varphi_n\}$ are uniformly equicontinuous. Since they are uniformly
bounded, it follows that $\varphi(x)=\sup_n\varphi_n(x)$ is continuous.

Next, we show that $\phi|_{X_n}=c_n$ for all $n\in\bN$. If $x\in X_n$, then 
$\phi_n(x)=c_n$ and for $k<n$, $\phi_k(x)<c_k<c_n$.
For $k>n$, $d(x,X_k)\ge 2^{-2^n}$ (with equality only if $x_0$ is aligned
with the last symbol in a $W_n$ block), so that $\phi_k(x)\le c_k-\delta_{2^n}
<-10/(\alpha 2^n)$. Since $c_n>-\frac{1}{\alpha}\sum_{k=n+1}^\infty 2^{-k}=
-1/(\alpha 2^n)$, we see that $\phi(x)=\phi_n(x)=c_n$.

The other properties follow directly from Lemma \ref{Choice of c_n}.
\end{proof}

To establish Theorem \ref{main},
the main task is to prove that for all $\beta\ge\alpha$, $P_\text{top}(\beta\phi)=
\sup_n P_\text{top}(X_n,\beta\phi)$.
We do this by using the Variational Principle \eqref{VarPrinciple}.
The above lemma controls $h_\mu(T)+\beta_n\int \phi\,d\mu$ for ergodic measures supported on any
of the $X_k$. In the following we show that
for any ergodic measure $\mu$ that is not supported on $\bigcup X_k$  and any 
$\beta \in [\beta_{n-1},\beta_n]$ we have
$$
h_\mu(T)+\beta\int \phi\,d\mu \le h_{\mu_n}(T)+\beta\int\phi\,d\mu_n.
$$
We will make heavy use of a pin-sequence construction on a product space as 
outlined below. For a survey of techniques by which theorems about dynamical 
systems are proved via appending an auxiliary system we refer the reader to 
\cite{Qu}. One particular application similar to ours can be found in \cite{A}.

Let $Y=\overline{\bigcup X_n}=\bigcup X_n\cup X_\infty$, which is a closed invariant subset of $X$.
Consider an additional full shift $Z=\{0,1\}^{\bZ}$ and the product system
$X\times Z$ with map $\bar T(x,z)=(T x,T z)$.
Let $P$ be the collection of pairs $(x,z)$ with the following properties:
\begin{itemize}
\item If $i<j$, and $z_k=0$ for $i<k\le j$ (possibly with $z_i=1$) then $x_i\cdots x_j\in \cL(Y)$;
\item If $i<j$, $z_i=1$, $z_j=1$ then $x_i\cdots x_j\not\in\cL(Y)$.
\end{itemize}
We refer to the space $P$ as the \emph{pin-sequence space} and to the 1's in $z$ as \emph{pins}.
We note that $P$ is a subshift of $X\times Z$.

Given a point $x\in X$ we describe a construction of a
sequence $z\in Z$ such that $(x,z)\in P$ in the following way.
\begin{itemize}
  \item For $N\in\bN$ set $n_0^N=-N$
  \item Given $n_{k-1}^N$ let $n_k^N=\min\{i>n_{k-1}^N:\, (x_{n_{k-1}^N},\ldots,x_i)\notin \cL(Y)\}$
  \item Let $z^N\in Z$ have coordinates $z_i^N=
  \left\{
     \begin{aligned}
        1,\quad&\text{ if }\, i\in (n_k^N)_{k=0}^\infty\\
        0,\quad&\text{ otherwise }
     \end{aligned}
  \right.$
  \item We obtain a pin sequence of $x$ by taking any limit point of $(z^N)_N$
\end{itemize}

This is a way of greedily partitioning $x\in X$ into maximal legal blocks from $\cL(Y)$.
Note that for $x\in Y$ the pin sequence obtained by this construction is $z=(\bar{0})$.

Let $\mu$ be any ergodic invariant measure on $X$ with $\mu(Y)=0$.
Fix any $x'\in X$ which is generic for $\mu$, i.e.,
$$
\mu=\lim_{n\to\infty}\frac1n\sum_{j=0}^{n-1}\delta_{T^j(x')}.
$$
Take a pin sequence $z'$ corresponding to $x'$ and consider a limit point of
$\frac1n\sum_{j=0}^{n-1}\delta_{\bar T^j(x', z')}$. This is an invariant measure on
$P$, which we will denote by $\bar{\nu}$. Then $\bar{\nu}\circ\pi_X^{-1}=\mu$,
where $\pi_X:X\times Z \mapsto X$ is the projection map. Since $\mu(Y)=0$, 
there exists $n\in \bN$ and  $w\not\in \cL_n(Y)$
such that $\mu([w])>0$. The ergodicity of $\mu$ implies that $\mu$-a.e.\ $x$ contains infinitely
many $w$ blocks in its negative coordinates.
In $\bar\nu$-a.e. $(x,z)$, each such $w$ block must contain a pin, and given $x$ and the location of
one pin, the pins to the right are determined. It follows that for $\mu$-a.e.\ $x$, there are at most $n$
points $(x,z)$ in the pin sequence space. Hence the map $\pi_X$ is finite-to-one $\bar\nu$-almost everywhere
and it follows that $h_\mu(T)= h_{\bar{\nu}}(T)$.

We shall derive upper bounds for
$h_{\bar\mu}(\bar T)$ and $\int \bar\phi\,d\bar\mu$ (where $\bar\phi=\phi\circ\pi_X$)
valid for any ergodic invariant measure $\bar{\mu}$ on $P$ whose support contains non-trivial pin-sequences.
By using an ergodic decomposition of $\bar\nu$ the estimates obtained
for $h_{\bar{\mu}}(\bar T)$ and $\int\bar{\phi}\,d\bar{\mu}$ will immediately yield the same bounds
for $h_{\bar{\nu}}(\bar T)$ and $\int\bar{\phi}\,d\bar{\nu}$.

Let
\begin{equation}\label{PinSet}
  A=\{(x,z)\in P:z_0=1\},
\end{equation}
then $\bar\mu(A)>0$ and we can induce on $A$. Denote by $\bar\mu_A$
the restriction of $\bar\mu$ to the set $A$ (normalized), i.e.
$$
\bar\mu_A=\frac{1}{\bar\mu(A)}\bar\mu|_A.
$$

Let $\tau(x,z)=\min\{j\ge 1: \bar T^j(x,z)\in A\}$ be the first return time to $A$.
We obtain an induced system $(A,\bar T_A,\bar\mu_A)$, where the induced
map $\bar T_A$ is given by $\bar T_A(x,z)=\bar T^{\tau(x,z)}(x,z)$.
By Abramov's formula we get $h_{\bar\mu}(\bar T)= \bar\mu(A)h_{\bar\mu_A}(\bar T_A)$.

We let $\cQ$ be the partition of $A$ into sets
$$
Q_j=\{(x,z)\in A: \tau(x,z)=j\}.
$$
Then $q_j=\bar\mu_A(Q_j)$ is the distribution of return times. We have
$\sum_j q_j=1$. Moreover, by Kac's lemma,
$$
\sum_j jq_j=\frac{1}{\bar\mu(A)}.
$$

We refine the first return time partition $\cQ$  according to which
$\cL(X_s)$ the cylinder set belongs to. The partition $\cRR$ is a sub-partition of $\cQ$
into sets
\begin{align*}
Q_{j,s}&=\Big\{(x,z)\in Q_j\colon x_0\ldots x_{j-1}\in\cL(X_s)\Big\}
\text{\quad for $s=1,\ldots, \lfloor\log j\rfloor-3$;}\\
Q_{j,0}&
=\Big\{(x,z)\in Q_j\colon x_0\ldots x_{j-1}\in\bigcup_{s=\lfloor \log j\rfloor-2}^\infty \cL(X_s)\Big\}.
\end{align*}
Then $\cP$ is a further refinement of $\cRR$ into cylinder sets (so that $Q_j$ is refined into
separate cylinder
sets of length $j$).
We let \begin{equation}\label{defqjs}
q_{j,s}=\bar\mu_A(Q_{j,s}).
\end{equation}
and estimate $h_{\bar\mu}(\bar T)$ and $\int \bar{\phi}\,d\bar{\mu}$
in terms of the $q_{j,s}$. The next lemma provides an estimate on the entropy of the measure.

\begin{lemma}\label{Entropy}
For any $\bar T_A$-invariant probability measure $\bar{\mu}_A$ on $A$ we have
  \begin{equation*}
  h_{\bar{\mu}_A}(\bar T_A)\le2+\sum_{j=1}^{\infty}\left(
  q_{j,0}(8+3\log j)+\sum_{s=1}^{\lfloor\log j\rfloor-3}q_{j,s}(jh_s+3\log j)\right),
\end{equation*}
where $h_s=h_{\rm top}(X_s,T)$ and the $q_{j,s}$ are defined as in \eqref{defqjs}.
\end{lemma}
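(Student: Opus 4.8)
The plan is to pass from the dynamical entropy of the induced system to a static entropy and then estimate the latter along the refinement chain $\cQ\le\cRR\le\cP$. First I would check that $\cP$ generates the Borel $\sigma$-algebra of $A$ under the two-sided iterates $\bar T_A^{\,n}$: iterating $\bar T_A$ forward recovers the successive first-return words, hence all nonnegative coordinates of $x$ together with all pins in nonnegative positions, while $\bar T_A^{-1}$ jumps back to the previous pin and thereby recovers the negative part, and $A$ is a cylinder in the $z$-coordinate; this is the standard fact that the first-return-word partition generates for an induced map over a cylinder. Consequently $h_{\bar\mu_A}(\bar T_A)=h_{\bar\mu_A}(\bar T_A,\cP)\le H_{\bar\mu_A}(\cP)$ as soon as $H_{\bar\mu_A}(\cP)<\infty$. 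Since the asserted inequality is vacuous when its right-hand side diverges, it suffices to prove the bound with $H_{\bar\mu_A}(\cP)$ in place of $h_{\bar\mu_A}(\bar T_A)$, and doing so term by term simultaneously supplies the finiteness needed to invoke the generating-partition identity.

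To estimate $H_{\bar\mu_A}(\cP)$ I would use the chain rule for static entropy,
\[
H_{\bar\mu_A}(\cP)=H_{\bar\mu_A}(\cQ)+H_{\bar\mu_A}(\cRR\mid\cQ)+H_{\bar\mu_A}(\cP\mid\cRR).
\]
For the first term, $H_{\bar\mu_A}(\cQ)=\sum_j -q_j\log q_j$, the Gibbs (Shannon) inequality against the reference probability vector on $\bN$ proportional to $j^{-2}$ gives $H_{\bar\mu_A}(\cQ)\le 2\sum_{j,s}q_{j,s}\log j$ plus an absolute constant. For the second term I would bound the conditional entropy of $\cRR$ given a cell $Q_j$ by the logarithm of the number of $\cRR$-cells it contains, which is at most $\max(1,\lfloor\log j\rfloor-2)\le j$, so $H_{\bar\mu_A}(\cRR\mid\cQ)\le\sum_{j,s}q_{j,s}\log j$. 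Together these account for the constant $2$ and for $3\log j$ of the budget. The substantive term is $H_{\bar\mu_A}(\cP\mid\cRR)=\sum_{j,s}q_{j,s}H_{\bar\mu_A}(\cP\mid Q_{j,s})\le\sum_{j,s}q_{j,s}\log N_{j,s}$, where $N_{j,s}$ is the number of length-$j$ first-return words lying in $\cL(X_s)$ when $1\le s\le\lfloor\log j\rfloor-3$, and in $\bigcup_{s'\ge\lfloor\log j\rfloor-2}\cL(X_{s'})\cup\cL(X_\infty)$ when $s=0$.

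The crux is the resulting word count. Each $X_s$ is the coded system on the two length-$2^s$ blocks in $W_s$, so a length-$j$ word of $X_s$ is determined by an offset (at most $2^s$ choices) together with a choice of block at each of the at most $\lceil j/2^s\rceil+1$ block positions it overlaps; since $h_s=2^{-s}$ this gives $\card\cL_j(X_s)\le 4\cdot 2^s\cdot 2^{jh_s}$. For $1\le s\le\lfloor\log j\rfloor-3$ one has $2^s\le j/8$, whence $\log N_{j,s}\le jh_s+\log j$, which combined with the $2\log j$ already collected produces the term $q_{j,s}(jh_s+3\log j)$. For the bucket $s=0$, the point is that $2^{\lfloor\log j\rfloor-2}>j/8$ forces $jh_{s'}=j/2^{s'}<8$ for every $s'\ge\lfloor\log j\rfloor-2$, so each relevant language has exponential growth factor below $2^8$, while the union of these languages together with $\cL_j(X_\infty)$ (which consists of the nearly-constant words) grows only polynomially in $j$; this gives $\log N_{j,0}\le 8+\log j$ after absorbing a stray constant into the leading $2$, and hence the term $q_{j,0}(8+3\log j)$. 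Summing the three estimates and collecting the $\log j$ contributions into $3\log j$ yields the claimed bound.

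I expect the word count of the previous paragraph to be the main obstacle, in particular controlling $\bigcup_{s'\ge\lfloor\log j\rfloor-2}\cL_j(X_{s'})$ together with $\cL_j(X_\infty)$ uniformly in $j$, and checking that every polynomial prefactor genuinely fits inside the advertised $3\log j+2$; this may force a slightly sharper reference vector (for instance proportional to $j^{-1}(\log j)^{-2}$) in the estimate for $H_{\bar\mu_A}(\cQ)$. A secondary technical point is the verification that $\cP$ generates for the induced map $\bar T_A$, so that replacing $h_{\bar\mu_A}(\bar T_A)$ by $H_{\bar\mu_A}(\cP)$ is legitimate; here the pin-sequence space helps, since by construction $x$ together with one pin determines all later pins, and reading the induced orbit forwards and backwards then recovers the full point.
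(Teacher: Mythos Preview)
Your outline follows the paper's proof almost exactly: pass to $H_{\bar\mu_A}(\cP)$ via the generating partition, peel off $H(\cP\mid\cRR)$ and bound it by $\sum q_{j,s}\log N_{j,s}$, and estimate the word counts $N_{j,s}$ just as you describe (the paper gets $n_{j,s}\le 2\cdot 2^{\lfloor j/2^s\rfloor}\cdot 2^s$ for $1\le s\le\lfloor\log j\rfloor-3$ and $n_{j,0}\le 160j$, matching your $jh_s+\log j$ and $8+\log j$).

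The one place you diverge is in handling $H(\cRR)$. You split it further as $H(\cQ)+H(\cRR\mid\cQ)$ and apply Gibbs to $H(\cQ)$ against a $j^{-2}$ reference. This three-term route leaves you short: your own text is inconsistent, first saying the two terms account for $3\log j$ and then later saying ``the $2\log j$ already collected''. In fact $H(\cQ)\le 2\sum q_j\log j+C$ and $H(\cRR\mid\cQ)\le\sum q_j\log\log j$, so after adding $\log N_{j,s}\le jh_s+\log j$ you end up with $jh_s+3\log j+\log\log j+O(1)$, which overshoots the lemma's $jh_s+3\log j$ by an unbounded $\log\log j$. The paper avoids this by skipping $\cQ$ entirely and writing $-\log q_{j,s}\le 2\log j+(-\log q_{j,s}-2\log j)^+$; summing the positive parts over the at most $\lfloor\log j\rfloor$ values of $s$ gives $\sum_j(1+\lfloor\log j\rfloor)/((e\ln 2)j^2)\le 2$, so $H(\cRR)\le 2+2\sum_{j,s}q_{j,s}\log j$ with no $\log\log j$ loss. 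Equivalently, apply your Gibbs inequality directly to the $(j,s)$-indexed family $(q_{j,s})$ against the reference $p_{j,s}\propto j^{-2}$; since $\sum_{j}\sum_{s=0}^{\lfloor\log j\rfloor-3}j^{-2}<\infty$ this is a legitimate probability vector and yields $2\log j$ plus an absolute constant in one step. With that single adjustment your proof coincides with the paper's.
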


\begin{proof} Since the only measure that we use in the proof of
this lemma is $\bar\mu_A$, we suppress it from the notation and write $h=h_{\bar\mu_A}(\bar T_A)$.
The expansivity of the map $T$ implies that the partition $\cP$ described above is generating under $\bar T_A$,
so that $h\le H_{\bar\mu_A}(\cP)$.
Recall that $H_{\bar\mu_A}(\cP)=H_{\bar\mu_A}(\cRR)+H_{\bar\mu_A}(\cP|\cRR)$, 
where $H_{\bar\mu_A}(\cP|\cRR)$ is the conditional entropy of $\cP$ given $\cRR$, i.e.
$$
H_{\bar\mu_A}(\cP|\cRR)=\sum_{R\in\cRR}\bar\mu_A(R)\left(-\sum_{B\in\cP}
\frac{\bar\mu_A(R\cap B)}{\bar\mu_A(R)}\log \frac{\bar\mu_A(R\cap B)}{\bar\mu_A(R)}\right).
$$
Letting $n_{j,s}$ denote the number of cylinder sets of length $j$ that form
$Q_{j,s}$, then the term in parentheses in the above expression is bounded above by $n_{j,s}$ so that,
$$
h\le \sum_{j=1}^\infty \left(
\sum_{s=0}^{\lfloor \log j\rfloor-3}
q_{j,s}(-\log q_{j,s}+\log n_{j,s})\right).
$$

We then write $-\log q_{j,s}\le 2\log j+(-\log q_{j,s}-2\log j)^+$, where $(.)^+$ 
denotes the positive part of the function in parentheses. Taking notice that 
$-\log q_{j,s}-2\log j\ge 0$ when $q_{j,s}\le \frac 1{j^2}$, we estimate
\begin{align*}
h&\le\sum_{j=1}^\infty \left(
\sum_{s=0}^{\lfloor \log j\rfloor-3}
q_{j,s}(2\log j+\log n_{j,s})
+\sum_{q_{j,s}\le \frac 1{j^2}} q_{j,s}(-\log q_{j,s}-2\log j)\right)\\
&=
\sum_{j=1}^\infty \left(
\sum_{s=0}^{\lfloor \log j\rfloor-3}
q_{j,s}(2\log j+\log n_{j,s})
+\sum_{q_{j,s}\le \frac 1{j^2}} \frac1{j^2}\left(j^2q_{j,s}\log \frac{1}{j^2q_{j,s}}\right)\right)
\end{align*}
Since $x\log \frac 1x\le \frac {1}{e\ln 2}$ on $[0,1]$, we obtain
\begin{equation}\label{EntropyEstimate}
\begin{split}
h&\le\sum_{j=1}^\infty \left(
\sum_{s=0}^{\lfloor \log j\rfloor-3}
q_{j,s}(2\log j+\log n_{j,s})
+\sum_{q_{j,s}\le \frac 1{j^2}} \frac1{(e\ln 2)j^2}\right)\\
&\le \sum_{j=1}^\infty \left(
\sum_{s=0}^{\lfloor \log j\rfloor-3}
q_{j,s}(2\log j+\log n_{j,s})\right)
+\sum_{j=1}^\infty \frac{1+\lfloor\log j\rfloor}{(e\ln 2)j^2}\\
&\le \sum_{j=1}^\infty \left(
\sum_{s=0}^{\lfloor \log j\rfloor-3}
q_{j,s}(2\log j+\log n_{j,s})\right)
+2
\end{split}
\end{equation}

Recall that $n_{j,s}=\card\cL_j(X_s)$. To bound $n_{j,s}$
for $s\le\lfloor\log j\rfloor-3$, we note that each such block can overlap
$1+\lfloor j/2^s\rfloor$ words from the generator $W_s$ and there are two choices
for each word. Finally, up to $2^s$ subsequent applications of the shift transformation
to the given block produce different blocks.
Therefore,
\begin{equation*}
  n_{j,s}\le 2\cdot2^{\lfloor j/2^s\rfloor}\cdot 2^s,
\end{equation*}
which implies that $\log n_{j,s}\le s+1+j/2^s=s+1+jh_s\le jh_s+\log j$.
For $s=0$, a similar estimate shows $n_{j,0}\le 160j$.
Without going through all the details, in $\cL_j(X_{\lfloor \log j\rfloor -2})$, 
there are up to 512 choices each occurring with 
$2^{\lfloor\log j\rfloor-2}$ shifts of elements, for a total of at most $128j$ elements.
We then deal similarly with $\cL_j(X_{\lfloor\log j\rfloor -1})$ and
$\cL_j(\bigcup_{s\ge\log j}X_s)$ and combine the estimates.

Combining this with \eqref{EntropyEstimate}, we conclude that
\begin{equation*}
h\le 2+\sum_{j=1}^{\infty}\left(q_{j,0}(3\log j+8)+
\sum_{s=1}^{\lfloor\log j\rfloor-3}q_{j,s}(jh_s+3\log j)\right).
\end{equation*}
\end{proof}

We now turn our attention to the integral $\int\bar{\phi}\,d\bar{\mu}$.
\begin{lemma}\label{Integral} Suppose $A\subset P$ is the set defined
in (\ref{PinSet}), $\phi$ is the potential defined in Lemma \ref{Def_phi} and
$\bar\phi:X\times Z\to \bR$ is given by $\bar\phi(x,z)=\phi(x)$.
Then for any ergodic $\bar T$-invariant probability measure $\bar\mu$ on $P$ with
$\bar{\mu}(A)\ne 0$ we have
  \begin{equation*}
  \int\bar{\phi}\, d\bar{\mu}\le \bar\mu(A)\sum_{j=1}^{\infty}
 j \left(-q_{j}\delta_j+\sum_{s=1}^{\lfloor\log j\rfloor-3}c_sq_{j,s}\right),
\end{equation*}
where $q_{j,s}=\bar\mu_A(Q_{j,s})$ as above.
\end{lemma}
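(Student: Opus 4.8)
The plan is to reduce the inequality, via Kac's formula, to a \emph{pointwise} bound on the Birkhoff sum $S_\tau\bar\phi(x,z)=\sum_{i=0}^{\tau(x,z)-1}\phi(T^ix)$ over each atom $Q_{j,s}$ of $\cRR$, and then to prove that bound by controlling how close the orbit segment $x,Tx,\dots,T^{j-1}x$ can come to the subshifts $X_n$. Since $\bar\mu$ is ergodic with $\bar\mu(A)>0$, Kac's formula gives $\int\bar\phi\,d\bar\mu=\bar\mu(A)\int_A S_\tau\bar\phi\,d\bar\mu_A$, and decomposing $A$ into the $Q_{j,s}$ it suffices to show
\[
S_\tau\bar\phi\le -j\delta_j\ \text{ on }Q_{j,0},\qquad
S_\tau\bar\phi\le j(c_s-\delta_j)\ \ \text{on }Q_{j,s}\ \ (1\le s\le\lfloor\log j\rfloor-3),
\]
because integrating these against $\bar\mu_A$ and summing over $j,s$ produces exactly the claimed bound (using $q_j=\sum_{s}q_{j,s}$). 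Throughout write $j=\tau(x,z)$; since $(x,z)\in P$ with $z_0=z_j=1$ and $z_k=0$ for $0<k<j$, the two defining conditions on $P$ give $x_0\cdots x_{j-1}\in\cL(Y)$ and $x_0\cdots x_j\notin\cL(Y)$, and on $Q_{j,s}$ we also have $x_0\cdots x_{j-1}\in\cL(X_s)$.

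The starting point is a distance lower bound valid on all of $A$. For every $n$ and every $0\le i\le j-1$, no point of $X_n$ can agree with $T^ix$ on the whole central window of radius $\max(i,j-i)$ — otherwise $x_0\cdots x_j$ would lie in $\cL(X_n)\subseteq\cL(Y)$ — so $\dist(T^ix,X_n)\ge 2^{-\max(i,j-i)}$, i.e.\ the distance exponent $k_n(T^ix)$ is at most $\max(i,j-i)$. As $c_n<0$ and $m\mapsto\delta_m$ is decreasing, $\phi_n(T^ix)=c_n-\delta_{k_n(T^ix)}\le -\delta_{\max(i,j-i)}$ for every $n$, hence $\phi(T^ix)\le-\delta_{\max(i,j-i)}\le-\delta_j$; summing over $i$ gives $S_\tau\bar\phi\le-\sum_{i=0}^{j-1}\delta_{\max(i,j-i)}\le-j\delta_j$, which settles $Q_{j,0}$.

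For $s\ge1$ we must gain an extra $jc_s$ on $Q_{j,s}$. The membership $x_0\cdots x_{j-1}\in\cL(X_s)$ yields a point of $X_s$ agreeing with $T^ix$ on the window of radius $\min(i,j-1-i)$, so together with the previous estimate $\phi_s(T^ix)\le c_s-\delta_{\max(i,j-i)}$ for \emph{every} $i$. For $n\ne s$ I invoke the uniform separation of the family $(X_n)$: because each $X_n$ is coded by blocks of length $2^n$ whose run lengths lie in $\{1,2^n-1,2^n\}$, a word of length $\ge 4\cdot2^{\min(s,n)}$ lies in at most one of $\cL(X_s),\cL(X_n)$. Consequently, if $i$ lies in the \emph{core} $[\,2^{s+1},\,j-2^{s+1}\,]$ — which omits only the $4\cdot2^s\le j/2$ indices near the block's ends, using $2^s\le j/8$ on $Q_{j,s}$ — then any window of $x$ realizing $\dist(T^ix,X_n)=2^{-k_n}$ with $k_n>2^{s+1}$ would force an overlap with $x_0\cdots x_{j-1}$ of length $\ge4\cdot2^s$ inside $\cL(X_s)\cap\cL(X_n)$, which is impossible; thus $k_n(T^ix)\le 2^{s+1}$ for all $n\ne s$ on the core, so $\sup_{n\ne s}\phi_n(T^ix)\le-\delta_{2^{s+1}}$. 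A direct computation with $\delta_m=(10+3\log m)/(\alpha m)$ and $|c_s|\le 1/(\alpha2^s)$, using $\max(i,j-i)\ge\lceil j/2\rceil\ge 2^{s+2}$, shows $\delta_{2^{s+1}}\ge\delta_{\max(i,j-i)}+|c_s|$ there, whence $\phi(T^ix)\le c_s-\delta_{\max(i,j-i)}$ for all $i$ in the core. Combining this with the crude bound $\phi(T^ix)\le-\delta_{\max(i,j-i)}$ on the remaining $4\cdot2^s$ boundary indices and summing,
\[
S_\tau\bar\phi\le(j-4\cdot2^s)c_s-\sum_{i=0}^{j-1}\delta_{\max(i,j-i)}\le jc_s+\tfrac{4}{\alpha}-\sum_{i=0}^{j-1}\delta_{\max(i,j-i)},
\]
and a direct estimate — the surplus of $\sum_{i=0}^{j-1}\delta_{\max(i,j-i)}=\delta_j+2\sum_{m=\lceil j/2\rceil}^{j-1}\delta_m+O(\delta_{j/2})$ over $j\delta_j$ being governed by $\tfrac{2}{\alpha}\sum_{m=\lceil j/2\rceil}^{j}\tfrac{3\log m}{m}\sim\tfrac{3\ln2}{\alpha}\log j$ — gives $\sum_{i=0}^{j-1}\delta_{\max(i,j-i)}\ge j\delta_j+\tfrac{4}{\alpha}$ for all $j\ge16$, which is the relevant range. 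Hence $S_\tau\bar\phi\le j(c_s-\delta_j)$ on $Q_{j,s}$, as required.

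The main obstacle is the core estimate: one cannot simply assert $\phi=\phi_s$ on $Q_{j,s}$, since near the endpoints of the block the orbit may come extremely close to a different subshift $X_n$ with $c_n$ near $0$. The argument must quantify, via the separation constant of the $(X_n)$ together with the precise shapes of $\delta_m$ and $c_s$, both that this cannot occur in the core and that the loss incurred over the $O(2^s)$ boundary indices is absorbed by the logarithmically large excess of $\sum_i\delta_{\max(i,j-i)}$ over $j\delta_j$; this is precisely what the constants $10+3\log j$ in $\delta_j$ and the cutoff $\lfloor\log j\rfloor-3$ are tuned to accommodate.
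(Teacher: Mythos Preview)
Your proof is essentially correct but takes a more circuitous route than the paper's. The key difference is that the paper establishes the pointwise bound $\phi(T^ix)\le c_s-\delta_j$ for \emph{all} $i\in\{0,\ldots,j-1\}$ on $Q_{j,s}$, not just for $i$ in a core. Two simplifications make this possible. First, the paper uses the uniform distance bound $\dist(T^ix,Y)\ge 2^{-j}$ (which follows directly from $x_{[0,j]}\notin\cL(Y)$, since for any $i$ the window $[i-j,i+j]$ contains $[0,j]$); this already gives $\phi_m(T^ix)\le c_m-\delta_j\le c_s-\delta_j$ for $m\le s$, without your position-dependent exponent $\max(i,j-i)$. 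Second, and more importantly, the separation argument for $m>s$ works even at the boundary: if $\dist(T^ix,X_m)=2^{-k}$ with $k\ge 2^{s+1}+2$, then the matching window $x_{i-k+1}\cdots x_{i+k-1}$, intersected with $[0,j-1]$, still has length at least $\min(k,i+1)+\min(k,j-i)-1\ge 2^{s+1}+1$ because $i\in[0,j-1]$ and $j\ge 2^{s+3}$. Any such word in $\cL(X_s)$ fails to lie in $\cL(X_m)$, so $\phi_m(T^ix)\le c_m-\delta_{2^{s+1}+1}\le -\delta_{2^{s+2}}$, and a short computation shows $-\delta_{2^{s+2}}\le c_s-\delta_j$. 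Summing then gives the lemma with no compensation step.

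Your approach---splitting into core and boundary, then absorbing the $4\cdot 2^s|c_s|\le 4/\alpha$ boundary loss into the excess $\sum_{i}\delta_{\max(i,j-i)}-j\delta_j$---does go through, but the final inequality $\sum_i\delta_{\max(i,j-i)}\ge j\delta_j+4/\alpha$ is only sketched asymptotically; a rigorous version would require an explicit estimate valid from $j=16$ onward (it does hold numerically, but you have not shown this). The paper's route avoids this step entirely by recognizing that the one-sided overlap with $[0,j-1]$ already suffices for the separation argument at boundary indices.
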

\begin{proof}
Let $\bar\phi_A(x,z)=\sum_{i=0}^{\tau(x,z)-1}\phi(T^ix)$ so
that $\int\bar\phi\,d\bar\mu=\bar\mu(A)\int \bar\phi_A\,d\bar\mu_A$.
To estimate $\int_{Q_j}\bar\phi_A\,d\bar\mu_A$, we use the partition
$\cRR$ defined above
and bound $\bar\phi_A$ on $Q_{j,s}$.
For $(x,z)\in Q_{j,s}$ we know that the pins are at zero and $j$ and the
block $x_{[0,j-1]}$ is in the language of $X_s$. The former implies
that $x_{[0,j]}\notin \cL(Y)$, so
\begin{equation}\label{eq:distbound}
\dist (T^ix,Y)\ge 2^{-j} \text{ for $i=0,\ldots,j-1$.}
\end{equation}
We claim that if $(x,z)\in Q_{j,s}$ with $s\le\lfloor\log j\rfloor-3$
then
$\phi(T^i(x))$ is bounded above by $c_s-\delta_j$.
For $m\le s$ we get from \eqref{eq:distbound} that $\phi_m(T^i(x))\le
c_m-\delta_j\le c_s-\delta_j$ since the $c_m$ are increasing. We now deal with
the case $m>s$. By assumption, $j\ge 2^{s+3}$, so since $x_{[0,j-1]}\in\cL_j(X_s)$
has occurrences of the last two symbols of a word in $W_s$ (that is either 01 or 10),
every $2^s$ symbols. In particular, in any block of $2^{s+1}+1$ symbols, there
are two such transitions, so that no block of length $2^{s+1}+1$ lies in $\cL(X_m)$
for any $m>s$,
and hence for $i=0,\ldots,j-1$, $\phi_m(T^i(x))\le c_m-\delta_{2^{s+1}+1}\le 0-\delta_{2^{s+2}}$.
It suffices to make sure that $-\delta_{2^{s+2}}\le c_s-\delta_{j}$ for $s\le \lfloor \log j\rfloor -3$.
This is true because
$$
\delta_{2^{s+2}}-\delta_j\ge \delta_{2^{s+2}}-\delta_{2^{s+3}}
=\frac{13+3s}{2^{s+3}\beta_1}\ge \frac{1}{2^s\beta_1}> -c_s.
$$
Hence we see that for $i=0,\ldots,j-1$, $\bar\phi(\bar T^i(x,z))=\phi(T^i(x))$
is bounded by $c_s-\delta_{j}$ for $(x,z)\in Q_{j,s}$ for $s\le\lfloor \log j\rfloor -3$ and by $-\delta_{j}$
for $(x,z)\in Q_{j,0}$.

We  obtain
\begin{align*}
  \int_{Q_j}\bar\phi_A\,d\bar\mu_A &
  =\sum_{s=1}^{\lfloor \log j\rfloor-3}\int_{Q_{j,s}}\bar\phi_A \,d\bar\mu_A +\int_{Q_{j,0}}
  \bar\phi_A \,d\bar\mu_A\\
   & \le -jq_j\delta_j+\sum_{s=1}^{\lfloor \log j\rfloor -3}\int_{Q_{j,s}}jc_s \,
   d\bar\mu_A \\
   &= j\left(-q_j\delta_j + \sum_{s=1}^{\lfloor \log j\rfloor -3} q_{j,s}c_s\right),
\end{align*}
and the proof is complete.
\end{proof}
We are now ready to complete the proof of Theorem \ref{main}.

\begin{proof}[Proof of Theorem \ref{main}]
We shall show that given any ergodic $T$-invariant measure $\mu$  and any 
$\beta\in [\beta_{n-1},\beta_n]$ for $n>1$ or $\beta\in [\alpha,\beta_1]$ for $n=1$ we have
\begin{equation}\label{eq:toshow}
h_\mu(T)+\int \beta\phi\,d\mu\le h_{\mu_n}+\int\beta\phi\,d\mu_n=h_n+\beta c_n,
\end{equation}
with equality if and only if $\mu$ is either $\mu_n$ or $\mu_{n+1}$. Any ergodic measure is
supported on some $X_k$, or $X_\infty$, or $X\setminus Y$. Among measures supported on $X_k$,
the maximum of $h_\mu(T)+\beta\int\phi\,d\mu$ is achieved by $\mu_k$ so that
\eqref{eq:toshow} holds for measures supported on $\bigcup X_k$ by Lemma \ref{Choice of c_n}.
For measures supported on $X_\infty$, $h_\mu(T)=0$ and $\int\phi\,d\mu=c=0$, so that
again Lemma \ref{Choice of c_n} establishes \eqref{eq:toshow} in this case.

We now consider the case when $\mu$ is an ergodic measure supported on $X\setminus Y$, and we let
$\bar\mu$ be an ergodic measure on $P$ such that ${\pi_X}_*\bar\mu=\mu$. 
Recall that $h_\mu(T)=h_{\bar\mu}(\bar T)
=\bar\mu(A)h_{\bar\mu_A}(\bar T_A)$, where $A$ is the pin set.
From Lemmas \ref{Entropy} and \ref{Integral}, we see
\begin{align*}
\frac{1}{\bar\mu(A)}\left(h_{\bar\mu}(\bar T)+\int\beta\bar\phi\,d\bar\mu\right)
\le\;&
2+
\sum_{j=1}^\infty\sum_{s=1}^{\lfloor\log j\rfloor-3}
q_{j,s}(jh_s+3\log j+\beta jc_s)\\
&+\sum_{j=1}^\infty q_{j,0}(8+3\log j)-\beta\sum_{j=1}^\infty jq_j\delta_j
\end{align*}
Using the inequalities $h_s+\beta c_s\le h_n+\beta c_n$ from Lemma \ref{Choice of c_n}, we have
\begin{align*}
\frac{1}{\bar\mu(A)}\left(h_{\bar\mu}(\bar T)+\int\beta\bar\phi\,d\bar\mu\right)\le\;&
2+(h_n+\beta c_n)\sum_{j=1}^\infty \sum_{s=0}^{\lfloor\log j\rfloor-3}q_{j,s}j\\
&+\sum_{j=1}^\infty(8+3\log j)\sum_{s=0}^{\lfloor\log j\rfloor-3}q_{j,s}-
\beta\sum_{j=1}^\infty jq_j\delta_j\\
\le\; &2+\frac{h_n+\beta c_n}{\bar\mu(A)}+\sum_{j=1}^\infty q_j(8+3\log j-j\beta\delta_j).
\end{align*}
Since $j\delta_j\beta>j\delta_j\alpha=10+3\log j$ and $\sum_{j=1}^\infty q_j=1$,
we see
$$
\frac{1}{\bar\mu(A)}\left(h_{\bar\mu}(T)+\int\beta\bar\phi\,d\bar\mu\right)\le
\frac{h_n+\beta c_n}{\bar\mu(A)}
$$
as required.
\end{proof}

\begin{remark}\label{NoMoreTransitions}
Note that on the interval $[\alpha,\infty)$ there are no phase transitions other 
than at points $\beta_n$. In addition, $\mu_{n+1}$ is an equilibrium state for 
$\beta\phi$ for all $\beta\in [\beta_n,\beta_{n+1}]$.

\end{remark}

Next, we address the case of finitely many phase transitions.
\begin{corollary}\label{FiniteCase}
 For any set of positive real numbers $\alpha<\beta_1<\beta_2<\dots<\beta_N$  
 there exists a continuous potential $\phi:X\to\bR$ such that for $\beta\ge\alpha$ 
 the only points of non-differentiability of the pressure function 
 $\beta\mapsto P_{\rm top}(\beta\phi)$ are $\beta_1,...,\beta_N$.
\end{corollary}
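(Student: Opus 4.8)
The plan is to rerun the construction proving Theorem \ref{main}, using only the first $N+1$ subshifts $X_1,\dots,X_{N+1}$, so that $\beta_N$ becomes the last transition and the pressure becomes affine beyond it. First I would record the finite analogue of Lemma \ref{Choice of c_n}: keeping $h_k=2^{-k}$ and putting
\[
c_k=-\sum_{j=k}^{N}\frac{1}{2^{j+1}\beta_j}\quad(1\le k\le N),\qquad c_{N+1}=0,
\]
the computations in the proof of Lemma \ref{Choice of c_n} show that $(c_k)_{k=1}^{N+1}$ is strictly increasing, that $h_k+\beta_k c_k=h_{k+1}+\beta_k c_{k+1}$ for $1\le k\le N$, that $h_m+\beta c_m\le h_1+\beta c_1$ for all $\beta\in[0,\beta_1]$ and $m\le N+1$, and that for $1\le k\le N$ and all $\beta\in[\beta_k,\beta_{k+1}]$ (with the convention $\beta_{N+1}=\infty$ when $k=N$) one has $h_m+\beta c_m\le h_{k+1}+\beta c_{k+1}$ for every $m\le N+1$, strictly unless $m\in\{k,k+1\}$. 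Part (3) of Lemma \ref{Choice of c_n} is not needed here, since $Y:=\bigcup_{n=1}^{N+1}X_n$ is a finite union of closed sets, hence already closed, and there is no $X_\infty$.

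Next I would set $\phi=\max\{\phi_1,\dots,\phi_{N+1}\}$, with each $\phi_n$ and the sequence $\delta_j=(10+3\log j)/(\alpha j)$ defined by \eqref{Def_phi_n}, where $\alpha\in(0,\beta_1)$ is fixed. The proof of Lemma \ref{Def_phi} then applies with no essential change: a maximum of finitely many uniformly equicontinuous and uniformly bounded functions is continuous, and the bound $c_n>-1/(\alpha 2^n)$ still holds because $\beta_j\ge\alpha$. Hence $\phi$ is continuous, $\phi|_{X_n}=c_n$ for $1\le n\le N+1$, $P_{\rm top}(X_n,\beta_n\phi)=P_{\rm top}(X_{n+1},\beta_n\phi)$ for $1\le n\le N$, and $P_{\rm top}(X_k,\beta_n\phi)<P_{\rm top}(X_n,\beta_n\phi)$ for $k\notin\{n,n+1\}$.

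The main step is to prove that $P_{\rm top}(X,\beta\phi)=\max_{1\le n\le N+1}(h_n+\beta c_n)$ for every $\beta\ge\alpha$. The inequality ``$\ge$'' is monotonicity of the pressure under restriction. For ``$\le$'' I would repeat the pin-sequence argument of the proof of Theorem \ref{main} with $Y=\bigcup_{n=1}^{N+1}X_n$: an ergodic measure is either supported on one $X_k$ (handled by the finite version of Lemma \ref{Choice of c_n} above, since $\phi|_{X_k}$ is constant) or satisfies $\mu(Y)=0$, in which case one lifts $\mu$ to the pin-sequence space over $Y$ and applies Lemmas \ref{Entropy} and \ref{Integral}. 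These estimates carry over verbatim except that the inner sums over $s$ now run only to $\min\{N+1,\lfloor\log j\rfloor-3\}$ and the set $Q_{j,0}$ is nonempty for only finitely many $j$; inserting $h_s+\beta c_s\le h_n+\beta c_n$ and $j\delta_j\beta\ge j\delta_j\alpha=10+3\log j$ exactly as in the proof of Theorem \ref{main} yields $h_\mu(T)+\beta\int\phi\,d\mu\le h_n+\beta c_n\le\max_k(h_k+\beta c_k)$ for $\beta$ in the relevant interval. It then remains to observe that $\beta\mapsto\max_{n\le N+1}(h_n+\beta c_n)$ is a maximum of $N+1$ affine functions with distinct slopes $c_1<\dots<c_{N+1}$, whose upper envelope on $[\alpha,\infty)$, by the finite version of Lemma \ref{Choice of c_n}, has active index $1$ on $[\alpha,\beta_1]$, index $k+1$ on $[\beta_k,\beta_{k+1}]$ for $1\le k\le N-1$, and index $N+1$ on $[\beta_N,\infty)$, so that two affine pieces of different slope meet exactly at each $\beta_i$. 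Consequently the pressure is affine, in particular differentiable, on $[\alpha,\infty)\setminus\{\beta_1,\dots,\beta_N\}$ and has a corner at each $\beta_i$, which is the assertion of the corollary.

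I expect the only point needing genuine care to be the transcription of the pin-sequence estimates---Lemmas \ref{Entropy} and \ref{Integral} and the final computation---to the truncated family of subshifts. Since using finitely many $X_n$ is a ``smaller'' situation than the infinite one, these bounds should become, if anything, easier, so this is a matter of bookkeeping rather than a real obstacle.
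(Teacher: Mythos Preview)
Your proposal is correct and follows essentially the same approach as the paper's own proof: truncate the construction to the subshifts $X_1,\dots,X_{N+1}$, record the finite version of Lemma~\ref{Choice of c_n}, define $\phi=\max_{1\le n\le N+1}\phi_n$, and rerun the pin-sequence argument verbatim. Your presentation is in fact a bit more explicit than the paper's---you spell out that the inner sums over $s$ are capped at $\min\{N+1,\lfloor\log j\rfloor-3\}$, that $Q_{j,0}$ is empty for large $j$, and that part~(3) of Lemma~\ref{Choice of c_n} is unnecessary because there is no $X_\infty$---whereas the paper simply says ``the rest of the proof is a verbatim repetition \dots\ with the assumption that $q_{s,j}=0$ whenever $s>N+1$''; but the substance is the same.
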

\begin{proof}
Our construction can be easily modified by truncating it at $n=N$ as follows. For $n=1,..., N+1$ we consider subshifts $(X_n)$  with generators $W_n=\{(\underbrace{00...01}_{2^n}),(\underbrace{11...10}_{2^n})\}$. We let $c_1$ be any real number, and define
$$
c_{n+1}=c_1+\sum_{j=1}^n\frac{1}{2^{j+1}\beta_j},\quad \text{for }n=1,...,N.
$$
One can check that for any integer $n\le N$ we have $c_n<c_{n+1}$, and the 
conditions similar to the ones in Lemma \ref{Choice of c_n} hold, i.e.
\begin{enumerate}
  \item $h_{n}+\beta_n c_{n}=h_{n+1}+\beta_n c_{n+1}$.
  \item For $k\le N+1$ we have $h_{k}+\beta c_{k}\le h_{n}+\beta c_{n}$ whenever 
   $\beta\in[\beta_{n},\beta_{n+1}]$ and $h_k+\beta c_k\le h_1+\beta c_1$ whenever 
   $\beta\in[0,\beta_1]$ . The inequalities are strict as long as $k\not\in\{n,n+1\}$.
  \item $\beta c_{N+1}\le h_{n}+\beta c_{n}$ for $0\le\beta\le\beta_1$ and $n=1,...,N$.
\end{enumerate}

Analogously to Lemma \ref{Def_phi} we define  $\phi(x)=\max\{\phi_n(x):\,1\le n\le N+1\}$, where
\begin{equation*}
 \phi_n(x)= c_n-\delta_{j},\text{ if }\,{\rm dist}(x, X_n)=\frac{1}{2^j}\quad
 \text{with}\,\,\delta_j=\frac{10+3\log j}{\alpha j}.
\end{equation*}
The continuity of $\phi$ is now immediate, since it is a maximum of finitely many 
continuous functions. The other characteristics of $\phi$ asserted in Lemma 
\ref{Def_phi} follow from properties of $(\beta_n)_{n=1}^N$ and $(c_n)_{n=1}^{N+1}$ 
above. The rest of the proof is a verbatim repetition of the proof of Theorem \ref{main} 
with the assumption that $q_{s,j}=0$ whenever $s>N+1$.
\end{proof}

We conclude with a discussion of the freezing properties of the potential $\phi$.
Define $I(\phi)=\{\int \phi\, d\mu: \mu\in \cM\}=[a_\phi,b_\phi]$. For 
$t\in I(\phi)$ we define the localized entropy
at $t$ by $\cH(t)=\sup\{h_\mu(T): \int \phi\, d\mu=t\}$. It follows that $\cH$ is a 
concave and upper semi-continuous (and hence continuous) function.
The proofs of the freezing properties of $\phi$ are based on the following 
elementary facts which are immediate consequences of the Variational Principle 
\eqref{VarPrinciple} and the upper semi-continuity of the
entropy map.
\begin{enumerate}
\item If $0<\beta_1<\beta_2$ and $\mu_i$ is an equilibrium state for $\beta_i\phi$ 
then $\int \phi\, d\mu_1\leq \int \phi\, d\mu_2$ and $h_{\mu_2}(T)\leq h_{\mu_1}(T)$.
\item If $\mu$ is an equilibrium state for $\tilde{\beta}\phi$ with 
 $\int \phi\, d\mu=b_\phi$ then for any $\beta>\tilde{\beta}$ a measure $\nu$ is an 
 equilibrium state for $\beta\phi$ provided that $\smallint \phi\,d\nu=b_\phi$ and 
 $h_\nu(T)=\cH(b_\phi)$.
\item If $\lim\limits_{n\to \infty} \beta_n=\beta_\infty<\infty$ and 
 $\mu=\lim\limits_{n\to \infty} \mu_n$ where $\mu_n$ is an equilibrium state of 
 $\beta_{n}\phi$ then   $\int \phi\, d\mu=\lim\limits_{n\to \infty} \int \phi\, d\mu_n$ 
 and $\mu$  is an equilibrium state for $\beta_{\infty}\phi$.
\end{enumerate}

\begin{corollary}\label{Quasi-crystal}
  If $\lim\limits_{n\to\infty} \beta_n=\beta_\infty<\infty$, then  the family of 
  equilibrium states of $\beta\phi$ is constant for all $\beta>\beta_\infty$.
\end{corollary}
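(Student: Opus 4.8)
The plan is to combine the elementary freezing facts (1)--(3) stated just above with the structure of the pressure function established in Theorem \ref{main}. The goal is to show that once $\beta\ge\beta_\infty$, no equilibrium state can move, because there is simply no room left: the integral $\int\phi\,d\mu$ of an equilibrium state is pinned at its maximal value and the entropy is pinned at the corresponding localized entropy.

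First I would identify the limiting behaviour at $\beta_\infty$. For each $n$ recall from Remark \ref{NoMoreTransitions} that $\mu_{n+1}$ is an equilibrium state for $\beta\phi$ on $[\beta_n,\beta_{n+1}]$, so in particular $\mu_{n+1}$ is an equilibrium state for $\beta_n\phi$ and also for $\beta_{n+1}\phi$. Taking a weak$^*$ limit point $\mu$ of $(\mu_{n+1})_n$ and applying fact (3) with this sequence shows $\mu$ is an equilibrium state for $\beta_\infty\phi$ and $\int\phi\,d\mu=\lim_n\int\phi\,d\mu_{n+1}=\lim_n c_{n+1}=c=0$. Next I would argue that $0=b_\phi$, i.e.\ that $c=\sup\{\int\phi\,d\nu:\nu\in\cM\}$: indeed $\phi\le 0$ fails in general, but $\phi|_{X_n}=c_n<0$ and $\phi|_{X_\infty}=0$, while off $Y=\overline{\bigcup X_n}$ the potential is strictly negative (it has been driven down by the $-\delta_j$ terms), so every invariant measure has $\int\phi\,d\nu\le 0$, with value $0$ attained by the fixed-point measures $\delta_{\bar 0},\delta_{\bar 1}\in X_\infty$. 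Hence $b_\phi=0=c$ and $\mu$ realizes $\int\phi\,d\mu=b_\phi$.

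Then I would invoke fact (2) with $\tilde\beta=\beta_\infty$: since $\mu$ is an equilibrium state for $\beta_\infty\phi$ with $\int\phi\,d\mu=b_\phi$, for every $\beta>\beta_\infty$ any measure $\nu$ with $\int\phi\,d\nu=b_\phi$ and $h_\nu(T)=\cH(b_\phi)$ is an equilibrium state for $\beta\phi$. Conversely, I would show every equilibrium state $\nu$ for $\beta\phi$ with $\beta>\beta_\infty$ must satisfy these two equalities: by fact (1) applied to the pair $\beta_\infty<\beta$, we get $\int\phi\,d\mu\le\int\phi\,d\nu$, so $b_\phi=\int\phi\,d\mu\le\int\phi\,d\nu\le b_\phi$, forcing $\int\phi\,d\nu=b_\phi$; and then $\nu$ is an equilibrium state whose integral is $b_\phi$, which forces $h_\nu(T)=\cH(b_\phi)$ by the Variational Principle \eqref{VarPrinciple} (any measure with larger entropy and the same integral would also be an equilibrium state, and in fact maximality of $h_\nu(T)+\beta\int\phi\,d\nu$ among all invariant measures combined with $\int\phi\,d\nu$ being maximal gives $h_\nu(T)=\cH(b_\phi)$). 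Therefore the set of equilibrium states for $\beta\phi$ is exactly $\{\nu\in\cM:\int\phi\,d\nu=b_\phi,\ h_\nu(T)=\cH(b_\phi)\}$, which does not depend on $\beta>\beta_\infty$.

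The main obstacle I anticipate is the clean verification that $b_\phi=c=0$ and the precise bookkeeping in the converse direction --- namely ruling out equilibrium states for $\beta\phi$, $\beta>\beta_\infty$, that might a priori be supported on $X\setminus Y$ or on some $X_n$ with a slightly negative integral but compensating entropy. This is handled by the inequality \eqref{eq:toshow} from the proof of Theorem \ref{main} together with the monotonicity in fact (1): for $\beta>\beta_\infty$ and any $n$ with $\beta_n<\beta$, \eqref{eq:toshow} gives $h_\nu(T)+\beta\int\phi\,d\nu\le h_{\mu_{n+1}}+\beta c_{n+1}\to h_\mu(T)+\beta\cdot 0$ (using $h_{\mu_{n+1}}=h_{n+1}\to 0$ and $c_{n+1}\to 0$), so $P(\beta\phi)=\beta\cdot 0+\sup\{h_\nu(T):\int\phi\,d\nu=0\}=\cH(b_\phi)$, pinning everything as claimed. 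Once this identity $P(\beta\phi)=\cH(b_\phi)$ for all $\beta\ge\beta_\infty$ is in hand, the constancy of the equilibrium state set is immediate.
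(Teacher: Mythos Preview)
Your argument is correct and follows essentially the same route as the paper: take a weak$^*$ limit of the $\mu_n$, apply fact~(3) to land an equilibrium state at $\beta_\infty$ with integral $b_\phi$, then use fact~(2) (together with fact~(1) for the converse inclusion, which you spell out more carefully than the paper does) to conclude that the equilibrium-state set is the $\beta$-independent set $\{\nu:\int\phi\,d\nu=b_\phi,\ h_\nu=\cH(b_\phi)\}$. The paper goes one step further and identifies this set explicitly as $\{\alpha\delta_{\bar 0}+(1-\alpha)\delta_{\bar 1}:\alpha\in[0,1]\}$ via the observation that the only ergodic measures with $\int\phi\,d\mu=c$ are the two fixed-point measures, but your abstract characterization already suffices for the corollary as stated.

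One small correction: your remark that ``$\phi\le 0$ fails in general'' is mistaken and in fact contradicted by your own subsequent analysis. Since each $\phi_n(x)=c_n-\delta_j$ with $c_n<0$ and $\delta_j\ge 0$, we have $\phi_n\le c_n<0$ pointwise, hence $\phi=\sup_n\phi_n\le\sup_n c_n=0$ everywhere; the supremum value $0$ is attained exactly on $X_\infty$ (in particular at the fixed points). This actually simplifies your verification that $b_\phi=0$. Also, in your final paragraph the appeal to \eqref{eq:toshow} for $\beta>\beta_\infty$ is not quite licit since that inequality was established only for $\beta\in[\beta_{n-1},\beta_n]$; but this paragraph is redundant anyway, as your main argument via facts (1)--(3) already closes the proof.
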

\begin{proof}
Let $\mu_x$ and $\mu_{y}$ denote the Dirac measures supported on the 
fixed points $x=(\bar{0})$ and $y=(\bar{1})$ respectively. Further, let $\mu_n$ 
denote the unique measure of maximal entropy of subshift $X_n$. Recall that by 
construction $\int \phi\, d\mu_n =c_n$ and $\lim\limits_{n\to \infty} c_n=c$.
We observe that $b_\phi=c$ and
\begin{equation}\label{eqqc}
  \left\{\mu\in \cM^e: \smallint \phi\, d\mu=c\right\}=\{\mu_x,\mu_y\}.
\end{equation}
If $\lim\limits_{n\to\infty} \beta_n = \beta_{\infty}<\infty$ then, by applying 
property 3 above to a weak$^\ast$ accumulation point $\mu$ of $(\mu_n)$, 
we conclude that $\int_X \phi\, d\mu=c$ and $\mu$ is an equilibrium state for 
$\beta_\infty\phi$. Finally,  property 2 and \eqref{eqqc}  imply that for all 
$\beta>\beta_\infty$ the set of equilibrium states for the potential 
$\beta \phi$ is precisely   $\{\alpha\mu_x+(1-\alpha)\mu_y: \alpha\in [0,1]\}$.
\end{proof}

Lastly, we attend to the freezing property in the case of finitely many discontinuity points $\beta_n$.
\begin{corollary} \label{cor123}
Let $\beta_1,\dots,\beta_{N+1}$ and $\phi$ be as in Corollary \ref{FiniteCase} 
and let $\mu_{N+1}$ denote the measure of maximal entropy on $X_{N+1}$. 
Then $\mu_{N+1}$ is the unique equilibrium state of $\beta\phi$ for all $\beta>\beta_N$.
\end{corollary}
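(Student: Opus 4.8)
The plan is to deduce the statement from the elementary monotonicity properties of equilibrium states recorded just before Corollary~\ref{Quasi-crystal}, together with one structural feature that is special to the \emph{finite} construction: for the potential $\phi$ of Corollary~\ref{FiniteCase} we have $\phi\le c_{N+1}$ on all of $X$, with equality \emph{exactly} on $X_{N+1}$. Indeed $\phi=\max_{1\le n\le N+1}\phi_n$ and $\phi_n\le c_n\le c_{N+1}$, so $\phi\le c_{N+1}$; and $\phi(x)=c_{N+1}$ forces $\phi_{N+1}(x)=c_{N+1}$ (since $\phi_n\le c_n<c_{N+1}$ for $n\le N$), hence $\dist(x,X_{N+1})=0$, i.e.\ $x\in X_{N+1}$. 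Consequently $b_\phi=c_{N+1}$, the level set $\{\phi=c_{N+1}\}$ equals the closed $T$-invariant subshift $X_{N+1}$, and therefore $\cH(b_\phi)=h_{\rm top}(X_{N+1},T)=h_{N+1}$, attained only by $\mu_{N+1}$. I would also record that $\mu_{N+1}$ is an equilibrium state for $\beta_N\phi$: this is part of the finite-case version of~\eqref{eq:toshow} (applied on $[\beta_{N-1},\beta_N]$ with index $n=N$), which gives $P_{\rm top}(\beta_N\phi)=h_N+\beta_N c_N=h_{N+1}+\beta_N c_{N+1}$ with both $\mu_N$ and $\mu_{N+1}$ realising the supremum.

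Now fix $\beta>\beta_N$. An equilibrium state for $\beta\phi$ exists by upper semicontinuity of the entropy map; alternatively, property~(2) applied with $\tilde\beta=\beta_N$ already exhibits $\mu_{N+1}$ as one, since $\int\phi\,d\mu_{N+1}=b_\phi$ and $h_{\mu_{N+1}}(T)=\cH(b_\phi)$. I claim every equilibrium state $\nu$ of $\beta\phi$ equals $\mu_{N+1}$. Applying property~(1) to $\beta_N<\beta$ with the equilibrium states $\mu_{N+1}$ (for $\beta_N\phi$) and $\nu$ (for $\beta\phi$) gives $c_{N+1}=\int\phi\,d\mu_{N+1}\le\int\phi\,d\nu\le b_\phi=c_{N+1}$, so $\int\phi\,d\nu=c_{N+1}$; as $c_{N+1}-\phi\ge 0$, this forces $\phi=c_{N+1}$ $\nu$-almost everywhere, so $\nu(X_{N+1})=1$ and $h_\nu(T)\le h_{\rm top}(X_{N+1},T)=h_{N+1}$. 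On the other hand $\nu$ being an equilibrium state gives $h_\nu(T)+\beta c_{N+1}=P_{\rm top}(\beta\phi)\ge h_{\mu_{N+1}}(T)+\beta\int\phi\,d\mu_{N+1}=h_{N+1}+\beta c_{N+1}$, whence $h_\nu(T)\ge h_{N+1}$. Therefore $h_\nu(T)=h_{N+1}$, and since $\mu_{N+1}$ is the unique measure of maximal entropy of $X_{N+1}$ we conclude $\nu=\mu_{N+1}$.

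The step that carries the content — and the one I would be most careful about — is the structural observation in the first paragraph that in the finite construction the maximal-integral level set $\{\phi=b_\phi\}$ is \emph{precisely} the subshift $X_{N+1}$. This is what freezes any equilibrium state onto a single subsystem, after which the uniqueness of its measure of maximal entropy pins it to a single measure. The analogous statement fails in the setting of Corollary~\ref{Quasi-crystal}: there $b_\phi=c=\lim_n c_n$ is attained only on the orbit closure of the two fixed points (the set $X_\infty$), so one obtains a whole interval of equilibrium states instead of uniqueness. Everything else in the argument is a routine use of the convexity/monotonicity of the pressure and of the fact that the subshift $X_{N+1}$ has a unique measure of maximal entropy.
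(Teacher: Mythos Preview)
Your proof is correct and follows essentially the route the paper intends: the paper's own ``proof'' merely says the argument is analogous to Corollary~\ref{Quasi-crystal} and leaves it to the reader, and what you have written is precisely that analogue, carried out in full. The key structural input you isolate---that in the finite construction $\phi\le c_{N+1}$ with equality exactly on $X_{N+1}$, so $b_\phi=c_{N+1}$ and $\cH(b_\phi)=h_{N+1}$ is attained uniquely by $\mu_{N+1}$---is exactly the finite-case replacement for \eqref{eqqc}, and your use of property~(1) to force $\int\phi\,d\nu=c_{N+1}$ (rather than property~(3), which is not needed here since no limit is involved) is the natural adaptation.
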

\begin{proof}
The proof is analogous to the proof of Corollary \ref{Quasi-crystal} and is left to the reader.
\end{proof}


\begin{thebibliography}{99}
\bibitem{ADU}J. Aaronson, M. Denker and M. Urbanski,
\emph{Ergodic theory for Markov fibred systems and parabolic rational maps,}
Trans. Amer. Math. Soc. {\bf 337} (1993), 495--548.
\bibitem{A} J. Antonioli, \emph{Compensation functions for factors of shifts of finite type}, 
Ergodic Theory Dynam. Systems {\bf 36} (2016), 375-389.
\bibitem{BI}L. Barreira and G. Iommi, \emph{Multifractal analysis and phase transitions for 
hyperbolic and parabolic horseshoes},
Israel J. Math. {bf 181} (2011), 347?379.
\bibitem{BL} H. Bruin and R. Leplaideur,
\emph{Renormalization, thermodynamic formalism and quasi-crystals in subshifts,}
Comm. Math. Phys. {\bf 321} (2013),  209--247.

\bibitem{BL2}H. Bruin and R. Leplaideur, \emph{Renormalization, freezing phase 
transitions and Fibonacci quasicrystals,} Ann. Sci. \'Ec. Norm. Sup\'er.  {\bf 48} (2015), 739--763.

\bibitem{CR}D. Coronel and J. Rivera-Letelier,
\emph{High-order phase transitions in the quadratic family,}
J. Eur. Math. Soc.  {\bf 17} (2015), 2725--2761.

\bibitem{CET}  M. Costeniuc, R. S. Ellis, and H. Touchette, \emph{Complete analysis of phase transitions and
ensemble equivalence for the Curie-Weiss-Potts model}, J. Math. Phys., 46(6), (2005)

\bibitem{DE} H. A. M. Dani\"{e}ls and A. C. D. van Enter, \emph{Differentiability 
properties of the pressure in lattice systems},
Comm. Math. Phys., {\bf71(1)} (1980), 65–76.

\bibitem{E} R.S. Ellis, \emph{Entropy, large deviations and statistical mechanics}, Springer Verlag (1985).

\bibitem{ES} A.C.D. van Enter and S.B. Shlosman, \emph{Provable first-order transitions 
for nonlinear vector and gauge models with continuous symmetries}, Comm. Math. Phys. 
\textbf{255} (2005), 21-32.

\bibitem{G} Hans-Otto Georgii, \emph{Gibbs measures and phase transitions}, volume 9 
\emph{de Gruyter Studies in Mathematics}. Walter de Gruyter \& Co., Berlin, second edition, (2011).

\bibitem{IRV}G. Iommi, F. Riquelme and A.  Velozo,  \emph{Entropy in the cusp and phase 
transitions for geodesic flows,} Israel J. Math. {\bf 225} (2018),  609--659.

\bibitem{IT} G. Iommi and M. Todd, \emph{Transience in dynamical systems}, 
Ergodic Theory and Dynamical Systems, {\bf 33(5)} (2013), 1450-1476.

\bibitem{JOP}A. Johansson, A. \"Oberg and M. Pollicott,
\emph{Phase transitions in long-range Ising models and an optimal condition for factors of g-measures,}
Ergodic Theory Dynam. Systems {\bf 39} (2019), 1317--1330.

\bibitem{Ke}G. Keller, \emph{Equilibrium states in ergodic theory}, 
London Mathematical Society Student Texts 42, 1998.
\bibitem{Lep}R. Leplaideur,
\emph{Chaos: butterflies also generate phase transitions,}
J. Stat. Phys. {\bf 161} (2015),  151--170.

\bibitem{Lo}A.O. Lopes,
\emph{The zeta function, Nondifferentiability of pressure, and the critical exponent of transition},
Adv. Math. \textbf{101} (1993), 133-165.

\bibitem{M} J. Miekisz, \emph{A microscopic model with quasicrystalline properties}, 
Journal Of Statistical Physics \textbf{58} (1990), 1137-1149.

\bibitem{Ph}  R. R. Phelps, \emph{Generic Fr\'{e}chet differentiability of the pressure in 
certain lattice systems}, Comm. Math. Phys., {\bf 91(4)} (1983), 557--562.

\bibitem{Qu} A. Quas, \emph{Coupling and Splicing}, Unpublished lecture notes available at
\texttt{http:www.math.uvic.ca/faculty/aquas/CoupleSplice.pdf}.

\bibitem{Ru1}D. Ruelle, \emph{Thermodynamic formalism: The mathematical structures 
of equilibrium statistical mechanics}, Second edition. Cambridge Mathematical Library. 
Cambridge University Press, Cambridge, 2004.
\bibitem{Ru2}D. Ruelle, \emph{Statistical mechanics of a one-dimensional lattice gas,}
Comm. Math. Phys. {\bf 9} (1968), 267--278.
\bibitem{S1}O. Sarig, \emph{On an example with a non-analytic topological pressure,}
C. R. Acad. Sci. Paris S\'er. I Math. {\bf 330} (2000),  311--315.
\bibitem{S2}O. Sarig, Omri, \emph{Phase transitions for countable Markov shifts,}
Comm. Math. Phys. {\bf 217} (2001),  555--577.
\bibitem{UW}M. Urbanski and C. Wolf, \emph{Ergodic theory of parabolic horseshoes}, Comm. Math.
Phys {\bf 281} (2008), 711--751.

\bibitem{W1} P. Walters, \emph{Differentiability Properties of the Pressure of a Continuous 
Transformation on a Compact Metric Space}, J. Lond. Math. Soc. {\bf46} (1992), 471--481
\end{thebibliography}
\end{document}